\theoremstyle{plain}
\newtheorem{remark}{Remark}
\numberwithin{equation}{section}
\long\def\salta#1{\relax}
\newcommand{\dint}{\dyle\int}
\newcommand{\re}{{I\!\!R}}
\newcommand{\ren}{\re^N}
\newcommand{\dyle}{\displaystyle}
\newcommand{\ene}{{I\!\!N}}
\newcommand{\io}{\int\limits_\O}
\renewcommand{\a }{\alpha }
\renewcommand{\b }{\beta }
\newcommand{\D }{\Delta }
\newcommand{\e }{\varepsilon }
\newcommand{\s }{\sigma }
\renewcommand{\O }{\Omega }
\newtheorem{Theorem}{Theorem}[section]
\newtheorem{Definition}[Theorem]{Definition}
\newtheorem{Lemma}[Theorem]{Lemma}
\newcommand{\cqd}{{\unskip\nobreak\hfil\penalty50
        \hskip2em\hbox{}\nobreak\hfil\mbox{\rule{1ex}{1ex} \qquad}
        \parfillskip=0pt \finalhyphendemerits=0\par\medskip}}
\begin{document}
\title[On  fractional $p$-laplacian parabolic problem]{On  fractional P-laplacian parabolic problem with general data}
\author[B. Abdellaoui, A. Attar, R. Bentifour \& I. Peral]{B. Abdellaoui$^*$, A. Attar$^*$, R. Bentifour$^*$ \& I. Peral$^\dag$}
\thanks{ This work is partially supported by project
MTM2013-40846-P, MINECO, Spain. }
\address{\hbox{\parbox{5.7in}{\medskip\noindent {$*$Laboratoire d'Analyse Nonlin\'eaire et Math\'ematiques
Appliqu\'ees. \hfill \break\indent D\'epartement de
Math\'ematiques, Universit\'e Abou Bakr Belka\"{\i}d, Tlemcen,
\hfill\break\indent Tlemcen 13000, Algeria.\\[3pt]
$\dag$Departamento de Matem{\'a}ticas, U. Autonoma
de Madrid, \hfill\break\indent 28049 Madrid, Spain.\\[3pt]
        \em{E-mail addresses: }{\tt boumediene.abdellaoui@inv.uam.es, \tt ahm.attar@yahoo.fr, \tt rachidbentifour@gmail.com, \tt ireneo.peral@uam.es}.}}}}
\thanks{2010 {\it Mathematics Subject Classification: 35K59, 35K65, 35K67, 35K92, 35B09.}   \\
   \indent {\it Keywords: Nonlinear nonlocal parabolic problems, entropy solution, finite time extension.}  }


\begin{abstract}
In this article the problem to be studied is the following
$$
(P)
\left\{
\begin{array}{rcll}
u_t+(-\D^s_{p}) u & = & f(x,t)  &
\text{ in } \O_{T}\equiv \Omega \times (0,T), \\
u & = & 0 & \text{ in }(\ren\setminus\O) \times (0,T), \\
u & \ge & 0 &
\text{ in }\ren \times (0,T),\\
u(x,0) & = & u_0(x) & \mbox{  in  }\O,
\end{array}%
\right.
$$
where $\Omega$ is a bounded domain, and $(-\D^s_{p})$ is the  fractional p-Laplacian operator defined by
$$ (-\D^s_{p})\, u(x,t):=P.V\int_{\ren} \,\dfrac{|u(x,t)-u(y,t)|^{p-2}(u(x,t)-u(y,t))}{|x-y|^{N+ps}} \,dy$$
with $1<p<N$, $s\in (0,1)$ and $f, u_0$ are measurable functions.

The main goal of this work is to prove that if $(f,u_0)\in L^1(\O_T)\times L^1(\O)$, problem $(P)$ has a weak  solution with suitable regularity. In addition, if $f_0, u_0$ are nonnegative, we show that the problem above has a nonnegative entropy solution.

In the case of nonnegative data, we give also some quantitative and qualitative properties of the solution according the values of $p$.
\end{abstract}

\maketitle

\section{Introduction.}\label{sec:s0}
This work deals with the following parabolic problem
\begin{equation}\label{eq:def}
\left\{
\begin{array}{rcll}
u_t+(-\D^s_{p}) u & = & f(x,t)  &
\text{ in } \O_{T}=\Omega \times (0,T), \\
u&\ge& 0 &\mbox{  in  }\ren\times (0,T)\\
u &= & 0 & \text{ in }(\ren\setminus\O) \times (0,T), \\
u(x,0) &= & u_0(x)& \mbox{  in  }\O,
\end{array}%
\right.
\end{equation}
where $\O$ is a bounded domain, $s\in (0,1), 1<p<N$ and
$$ (-\D^s_{p})\, u(x,t):=\int_{\mathbb{R}^{N}} \,
\frac{|u(x,t)-u(y,t)|^{p-2}(u(x,t)-u(y,t))}{|x-y|^{N+ps}}\ dy$$
is the fractional $p-$laplacian operator which is, in particular,    non local. The data $f$ and $u_0$ are measurable functions under suitable hypotheses that we will precise in each instance.

 For the local  $p-$laplacian operator  there are a large number of references in the literature. Among all of them we refer to \cite{Pri} where the author proved the existence of an entropy solution for all data in $(f,u_0)\in L^1(\O_T)\times L^1(\O)$. The case of general measure data was studied in \cite{BM}, \cite{BMR}, where the existence of renormalized solution is obtained.

Respect to the non local operator, the case $p=2$ has been  analyzed in \cite{LPPS}. Using duality and approximation arguments, the authors proved the existence and the uniqueness of the solution that belongs to a suitable fractional Sobolev space. The case with Hardy potential and under "natural" condition on $(f,u_0)$  has been studied in \cite{AMPP}.

In \cite{VAZ} and \cite{MRT} for $p\neq 2$ and $f\equiv 0$, the authors obtained the existence of energy solution for all $u_0\in L^2(\O)$, explaining the  asymptotical behavior with respect to properties of the corresponding Barenblatt type solution (for $p>2$).

The main goal of this paper is to consider the case $p\neq 2$ with more general data $(f,u_0)\in L^1(\O_T)\times L^1(\O)$. We will prove the existence of a weak \textit{solution obtained as limit of approximations} (SOLA) that belongs to a suitable fractional Sobolev space. Moreover if the data are nonnegative we will prove that a such solution is an entropy solution.

It is worthy to point out that the stationary problem  has been studied in \cite{KMS} and \cite{AAB}. We will use the functional results explained in \cite{AAB} and  some techniques there.

More precisely,  the paper is organized as follow.

In Section \ref{sec0}  we will give some concepts in which the solutions are considered and some functional tools and algebraic inequalities  that will be used  along of  the paper.

Section \ref{sec1} is devoted to  prove the existence of a weak solution for all data $(f,u_0)\in L^1(\O_T)\times L^1(\O)$. The  idea is to proceed by finding a solution as limit of approximations.

Section \ref{s2} is devoted to introduce the concept of entropy solution and to prove that a SOLA is an entropy solution.

In the last section we analyze some qualitative properties of the solutions  related to  the extinction in finite time and the finite speed of  propagation, that is different to the local case.

\section{Preliminaries and functional setting}\label{sec0}

In this section we give some functional settings that will be used
below, we refer to \cite{DPV} and \cite{Adams} for more details.

Let $s\in (0,1)$ and $p>1$, assume that $\O\subset \ren$, the fractional Sobolev
spaces $W^{s,p}(\Omega)$, is defined by
$$
W^{s,p}(\Omega)\equiv
\Big\{ \phi\in
L^p(\O):\dint_{\O}\dint_{\O}|\phi(x)-\phi(y)|^pd\nu<+\infty\Big\}
$$
where $d\nu=\dyle\frac{dxdy}{|x-y|^{N+ps}}$. It is clear that $W^{s,p}(\O)$ is a Banach space endowed with the norm
$$
\|\phi\|_{W^{s,p}(\O)}=
\Big(\dint_{\O}|\phi(x)|^pdx\Big)^{\frac 1p}
+\Big(\dint_{\O}\dint_{\O}|\phi(x)-\phi(y)|^pd\nu\Big)^{\frac
1p}.
$$
In the same way we define the space $W^{s,p}_{0} (\O)$ as
the completion of $\mathcal{C}^\infty_0(\O)$ with respect to the
previous norm.
In the case where $\O=\ren$, we have the next Sobolev inequality
\begin{Theorem} \label{Sobolev}(Fractional Sobolev inequality)
Assume that $0<s<1$ and $p>1$ are such that $ps<N$. Then there exists a positive constant $S\equiv S(N,s,p)$ such that for all
$v\in C_{0}^{\infty}(\ren)$,
$$
\dint_{\mathbb{R}^{N}}\dint_{\mathbb{R}^{N}}
\dfrac{|v(x)-v(y)|^{p}}{|x-y|^{N+ps}}\,dxdy\geq S
\Big(\dint_{\mathbb{R}^{N}}|v(x)|^{p_{s}^{*}}\,dx\Big)^{\frac{p}{p^{*}_{s}}},
$$
where $p^{*}_{s}= \dfrac{pN}{N-ps}$.
\end{Theorem}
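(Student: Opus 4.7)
The plan is to prove this fractional Sobolev inequality along classical lines (following, e.g., Di Nezza--Palatucci--Valdinoci): first establish the endpoint case $p=1$, then bootstrap to general $p>1$ by composition with a suitable power function. By density it suffices to prove the inequality for $v\in C_0^\infty(\ren)$, and by replacing $v$ with $|v|$ (which does not increase the Gagliardo seminorm) we may assume $v\ge 0$.

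For the endpoint $p=1$, I would prove
\[
\Big(\int_{\ren} v^{N/(N-s)}\,dx\Big)^{(N-s)/N} \leq C_1 \int_{\ren}\int_{\ren}\frac{|v(x)-v(y)|}{|x-y|^{N+s}}\,dx\,dy
\]
by combining three ingredients: the fractional isoperimetric inequality $|A|^{(N-s)/N}\leq C\,P_s(A)$, where $P_s(A)=\int_A\int_{A^c}|x-y|^{-(N+s)}\,dx\,dy$; the coarea-type identity $\int_0^\infty P_s(\{v>t\})\,dt = \tfrac{1}{2}[v]_{W^{s,1}(\ren)}$, obtained by writing $v(x)-v(y)=\int_0^\infty(\chi_{\{v>t\}}(x)-\chi_{\{v>t\}}(y))\,dt$; and Minkowski's integral inequality applied to the layer-cake representation $v=\int_0^\infty \chi_{\{v>t\}}\,dt$, which yields $\|v\|_{L^{N/(N-s)}}\leq \int_0^\infty|\{v>t\}|^{(N-s)/N}\,dt$.

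For general $p>1$, I would apply the $p=1$ inequality to $w=v^\gamma$ with $\gamma=p(N-s)/(N-ps)$, chosen so that $\gamma\cdot\frac{N}{N-s}=p^*_s$. This gives $\|v\|_{L^{p^*_s}}^\gamma \leq C\,[v^\gamma]_{W^{s,1}}$. Using the pointwise bound $|a^\gamma-b^\gamma|\leq \gamma(a^{\gamma-1}+b^{\gamma-1})|a-b|$ and the kernel's symmetry, the right-hand side is controlled by $2\gamma\int\int v(x)^{\gamma-1}|v(x)-v(y)|/|x-y|^{N+s}\,dx\,dy$, which by H\"older's inequality and a careful regrouping should be bounded by $C\,[v]_{W^{s,p}}\,\|v\|_{L^{p^*_s}}^{\gamma-1}$; dividing by $\|v\|_{L^{p^*_s}}^{\gamma-1}$ then completes the proof.

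The main obstacle is precisely this last estimate: the naive kernel split $|x-y|^{-(N+s)}=|x-y|^{-(N+ps)/p}\cdot|x-y|^{-N(p-1)/p}$ followed by H\"older with conjugate exponents $(p,p/(p-1))$ produces the non-integrable weight $|x-y|^{-N}$ paired with a power of $v$, so the bootstrap cannot be completed in a single pass. The resolution, following Maz'ya, is to replace the continuous layer-cake argument by a dyadic truncation: set $v_k=\min(2^k,(v-2^k)_+)$, apply the ingredient inequalities to each $v_k$ separately, and sum the resulting geometric series using the essentially disjoint supports of consecutive $v_k$. The specific choice $\gamma=p(N-s)/(N-ps)$ is precisely what makes the dyadic contributions telescope into the sharp exponent $p^*_s$.
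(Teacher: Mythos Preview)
The paper does not supply its own proof of this theorem: immediately after the statement it simply writes ``See \cite{Ponce} for a elementary proof'' and moves on. So there is no paper-proof to compare against in detail.

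Your outline is a correct and standard route, essentially the one in Di~Nezza--Palatucci--Valdinoci \cite{DPV} (also cited by the paper). You have correctly identified the genuine difficulty: the na\"{\i}ve bootstrap from $p=1$ to general $p$ via $w=v^\gamma$ and a single application of H\"older fails because the kernel split leaves the borderline weight $|x-y|^{-N}$, and the cure is indeed the Maz'ya dyadic truncation $v_k=\min\{2^k,(v-2^k)_+\}$ followed by summation. One small point worth tightening: in your coarea step, the identity $v(x)-v(y)=\int_0^\infty(\chi_{\{v>t\}}(x)-\chi_{\{v>t\}}(y))\,dt$ as written only gives the signed difference, so to land on $|v(x)-v(y)|$ you should note that after integrating in $t$ the absolute value appears automatically (the integrand has a definite sign on the range $[\min,\max]$). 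Otherwise the sketch is sound.
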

See \cite{Ponce} for a elementary proof.

We also will use the following extension result.
\begin{Lemma}\label{ext}
Assume that $\Omega\subset \ren$ is a regular domain, then for all
$w\in W^{s,p} (\O)$, there exists $\tilde{w}\in
W^{s,p}(\ren)$ such that $\tilde{w}_{|\Omega}=w$ and
$$
||\tilde{w}||_{W^{s,p} (\ren)}\le C
||w||_{W^{s,p}(\O)},
$$
where $C\equiv C(N,s,p,\O)>0$.
\end{Lemma}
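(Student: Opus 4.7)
The plan is to follow the standard localization–flattening–reflection strategy that works for the fractional Sobolev spaces on Lipschitz (``regular'') domains. First I would cover $\overline{\Omega}$ by finitely many open sets $U_0, U_1, \dots, U_k$, with $U_0 \Subset \Omega$ and each $U_j$ for $j \geq 1$ meeting $\partial\Omega$ and carrying a bi-Lipschitz chart $\Phi_j : U_j \to B_1$ mapping $U_j \cap \Omega$ onto the upper half ball $B_1^+ = \{z \in B_1 : z_N > 0\}$. Taking a subordinate partition of unity $\{\eta_j\} \subset C_c^\infty$ and writing $w = \sum_j \eta_j w$ reduces the problem to extending each piece separately. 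The interior piece $\eta_0 w$ is extended by zero and lies in $W^{s,p}(\ren)$ because its support is compactly contained in $\Omega$.

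For a boundary piece $j \geq 1$, pushing forward by $\Phi_j$ yields $v = (\eta_j w) \circ \Phi_j^{-1} \in W^{s,p}(B_1^+)$, supported in $B_{1/2}^+$, with norms equivalent to those of $\eta_j w$ (bi-Lipschitz changes of variable preserve both $L^p$ and the Gagliardo seminorm up to multiplicative constants, because the Jacobian determinants are bounded away from zero and infinity, and Euclidean distances are comparable to their transforms). I would then extend $v$ by even reflection across $\{z_N = 0\}$:
\[
Ev(z', z_N) := \begin{cases} v(z', z_N), & z_N \geq 0,\\ v(z', -z_N), & z_N < 0,\end{cases}
\]
and multiply by a cutoff $\chi \in C_c^\infty(B_1)$ equal to $1$ on $B_{3/4}$.

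The core estimate is the bound $\|Ev\|_{W^{s,p}(B_1)} \leq C \|v\|_{W^{s,p}(B_1^+)}$. The $L^p$ part is trivial. For the Gagliardo seminorm I would split the double integral over $B_1 \times B_1$ into the three regions $\{z_N, w_N > 0\}$, $\{z_N, w_N < 0\}$, and the mixed region $\{z_N > 0, w_N < 0\}$ (plus its symmetric). The first two regions reduce immediately to the seminorm of $v$ on $B_1^+$ by the very definition of $Ev$. In the mixed region, the substitution $w_N \mapsto -w_N$ turns the integrand into
\[
\frac{|v(z', z_N) - v(w', w_N)|^p}{|(z'-w', z_N + w_N)|^{N+ps}}
\]
with $z_N, w_N > 0$, and using the elementary inequality
\[
|(z'-w', z_N + w_N)| \geq |(z'-w', z_N - w_N)|
\]
this is dominated by the integral defining the seminorm of $v$ on $B_1^+ \times B_1^+$. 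Pulling each piece back through $\Phi_j^{-1}$, multiplying by a global cutoff supported in a neighborhood of $\overline\Omega$, and summing over $j$ yields the required $\tilde w \in W^{s,p}(\ren)$, with $\|\tilde w\|_{W^{s,p}(\ren)} \leq C \|w\|_{W^{s,p}(\Omega)}$ and the constant depending only on $N, s, p$ and the geometry of $\Omega$ (through the charts and the partition of unity).

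The main obstacle is the estimate on the mixed region combined with the flattening: one must verify that after composition with the bi-Lipschitz $\Phi_j$ both the Gagliardo integrand and the reflection geometry survive, and that the constants from the various steps (change of variables, reflection, cutoff, partition of unity) can be combined into a single $C(N, s, p, \Omega)$. An alternative, more abstract route would be to invoke Stein's extension theorem separately in $L^p$ and $W^{1,p}$ and recover the fractional case by real interpolation using $(L^p, W^{1,p})_{s,p} = W^{s,p}$, which bypasses the reflection computation but relies on the interpolation identity.
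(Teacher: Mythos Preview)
The paper does not supply its own proof of this lemma: it simply writes ``See \cite{DPV} for the proof'' and moves on. Your localization--flattening--reflection argument is correct and is exactly the strategy carried out in the cited reference \cite{DPV}, so in that sense your proposal matches what the paper defers to; the reflection estimate on the mixed region via $|(z'-w',z_N+w_N)|\ge |(z'-w',z_N-w_N)|$ is the right key step, and the alternative interpolation route you mention is also valid.
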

See \cite{DPV} for the proof.

\begin{remark}\label{equiv}
If $\O$ is bounded regular domain, by the Poincaré inequality we
can endow $W^{s,p}_{0}(\O)$ with the equivalent norm
$$
|||\phi|||_{W^{s,p}_{0}(\O)}=
\Big(\dint_{\O}\dint_{\O}\dfrac{|\phi(x)-\phi(y)|^p}{|x-y|^{N+ps}}{dxdy}\Big)^{\frac
1p}.
$$
\end{remark}
For $w\in W^{s,p}(\ren)$, we  define the \textit{fractional p-Laplacian} as
$$
(-\Delta)^s_{p} w(x)=\mbox{ P.V. }
\dint_{\ren}\dfrac{|w(x)-w(y)|^{p-2}(w(x)-w(y))}{|x-y|^{N+ps}}{dy}.
$$

It is clear that for all $w, v\in W^{s,p}(\ren)$, we have
$$
\langle (-\Delta)^s_{p}w,v\rangle
=\dfrac 12\dint_{\ren}\dint_{\ren}\dfrac{|w(x)-w(y)|^{p-2}(w(x)-w(y))(v(x)-v(y))}{|x-y|^{N+ps}}{dxdy}.
$$
Now, if $w, v\in W^{s,p}_0(\O)$, we get
$$
\langle (-\Delta)^s_{p}w,v\rangle
=\dfrac 12\iint_{D_\O}\dfrac{|w(x)-w(y)|^{p-2}(w(x)-w(y))(v(x)-v(y))}{|x-y|^{N+ps}}{dxdy}.
$$
where $D_{\O}=\ren\times \ren\setminus \mathcal{C}\O\times
\mathcal{C}\O$.

It is easy to check that $(-\Delta)^{s}_{p}:W^{s,p}_0(\Omega)\longrightarrow W^{-s,p'}(\Omega)$. Notice that  $W^{-s,p'}(\Omega)$ is  the dual space of  $W^{s,p}_0(\Omega)$.

Let define now the corresponding parabolic spaces.

As in the local case, the space $L^{p}(0,T; W^{s,p}_0(\O))$ is defined as the set of function $\phi$ such that
$\phi\in L^p(\O_T)$ with $||\phi||_{L^{p}(0,T; W^{s,p}_0(\O))}<\infty$ where
$$
||\phi||_{L^{p}(0,T; W^{s,p}_0(\O))}=\Big(\int_0^T\iint_{D_\O}|\phi(x,t)-\phi(y,t)|^pd\nu\,dt\Big)^{\frac
1p}.
$$
It is clear that $L^{p}(0,T; W^{s,p}_0(\O))$ is a Banach spaces whose dual space is  $L^{p'}(0,T; W^{-s,p'}_0(\O))$.

For simplicity of typing and for any measurable function $u$, we set
$$
U(x,y,t)=|u(x,t)-u(y,t)|^{p-2}(u(x,t)-u(y,t)).
$$
We introduce the notions of solution to be use later.

\begin{Definition}\label{energy}
Assume  $(f,u_0)\in L^{p'}(0,T; W^{-s, p'}(\Omega))\times L^2(\O)$. We say that $u$ is an energy solution to problem \eqref{eq:def} if $u\in L^{p}(0,T; W^{s,p}_0(\O))\cap \mathcal{C}([0,T], L^2(\O))$, $u_t\in L^{p'}(0,T; W^{-s,p'}_0(\O))$, $u(x,.)\to u_0$ strongly in $L^2(\O)$ as $t\to 0$ and for all $v\in  L^{p}(0,T; W^{s,p}_0(\O))$ we have
\begin{equation*}
\int_0^T\langle u_t, v\rangle dt+ \dyle \dfrac 12\int_0^T\iint_{D_\O} U(x,y,t)(v(x,t)-v(y,t))d\nu\,dt=\iint_{\O_T}f(x,t)v dxdt
\end{equation*}
\end{Definition}
Notice that the existence of energy solution follows using classical argument for monotone operator. See \cite{Lio}.

For data $(f,u_0)\in L^1(\O_T)\times L^1(\O)$, we need to precise the sense in which the solution is defined.
\begin{Definition}\label{weak}
Assume  $(f,u_0)\in L^1(\O_T)\times L^1(\O)$, we say that $u$ is a weak solution (or distributional solution) to problem \eqref{eq:def} if for all $v\in  \mathcal{C}^\infty_0(\O_T)$ we have
\begin{equation*}
\dyle-\iint_{\O_T} u\,v_t dxdt +\dfrac 12\int_0^T\iint_{D_\O}U(x,y,t)(v(x,t)-v(y,t))d\nu\,dt=\iint_{\O_T} f(x,t)v dxdt.
\end{equation*}
\end{Definition}

In the local case a stronger notion of solution, entropy solution, is introduced in order to get uniqueness, see \cite{Pri}.
We  will extend this notion to the fractional framework in Section 4.
\begin{Definition}
We say that $u\in {\mathcal{T} }^{s,p}_0(\O_T)$ if $T_k(u)\in L^p(0,T; W^{s,p}_0(\O))$ for all $k>0$ where

\begin{equation}\label{f-trun}
T_k(s)=\left\{\begin{array}{cl}
s\,,&\hbox{ if }|s|\le k\,;\\[2mm]
k\dfrac{s}{|s|}\,,&\hbox{ if }|s|> k.
\end{array}\right.
\end{equation}
\end{Definition}
Some apriori estimates will be proved in the  classical  Marcinkiewicz space ${\mathcal{M}}^{q}(\O_T)$, that for the reader convenience, we define below.
\begin{Definition}
Let $u$ be a measurable function, define $$ \Phi_{u}(k)=\mu
\{(x,t)\in \O_T:|u(x,t)|>k\}.$$ We say
that $u$ is in the Marcinkiewicz space ${\mathcal{M}
}^{q}(\O_T, d\mu)$ if $ \Phi_{u}(k)\le Ck^{-q}$.

Notice that $L^{q}(\O_T)\subset {\mathcal{M}
}^{q}(\O_T)$ for all $q>1$.
\end{Definition}

The following elementary algebraic inequalities can be proved using suitable rescaling argument.
\begin{Lemma}\label{algg}
 Assume that $p\ge 1$, $(a, b) \in (\re^+)^2$ and $\a>0$, then there exist $c_1, c_2,c_3, c_4>0$, such that
\begin{equation}\label{alge1}
(a+b)^\a\le c_1a^\a+c_2b^\a
\end{equation}
and
\begin{equation}\label{alge3}
|a-b|^{p-2}(a-b)(a^{\a}-b^{\a})\ge c_3|a^{\frac{p+\a-1}{p}}-b^{\frac{p+\a-1}{p}}|^p.
\end{equation}
If moreover $\a\ge 1$, then under the same conditions on $a,b,p$ as above, we have
\begin{equation}\label{alge2}
|a+b|^{\a-1}|a-b|^{p}\le c_4 |a^{\frac{p+\a-1}{p}}-b^{\frac{p+\a-1}{p}}|^p,
\end{equation}
where $c_4>0$ is independent of $a$ and $b$.

Assume now that $a,b\in \re$ and $p\ge 1$, then
\begin{equation}\label{general}
|a-b|^{p-2}(a-b)(T_k(a)-T_k(b))\ge |T_k(a)-T_k(b)|^p
\end{equation}
and
\begin{equation}\label{general00}
|a-b|^{p-2}(a-b)(G_k(a)-G_k(b))\ge |G_k(a)-G_k(b)|^p
\end{equation}
where $G_k(s)=s-T_k(s)$.
\end{Lemma}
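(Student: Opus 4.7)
The plan is to handle the five inequalities in three groups, exploiting homogeneity and convexity together with the monotone, $1$-Lipschitz character of the truncations.

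First I would dispatch \eqref{alge1} by a one-line convexity/concavity argument. When $\alpha \ge 1$, convexity of $t \mapsto t^\alpha$ yields $(a+b)^\alpha \le 2^{\alpha-1}(a^\alpha + b^\alpha)$, and when $0 < \alpha \le 1$ subadditivity of $t\mapsto t^\alpha$ gives $(a+b)^\alpha \le a^\alpha + b^\alpha$. In either case $c_1$ and $c_2$ can be taken explicitly.

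For \eqref{alge3} and \eqref{alge2} I would exploit the key observation that both sides of each inequality are positively homogeneous of degree $p+\alpha-1$ in $(a,b)$ and symmetric in the roles of $a$ and $b$. Assuming without loss of generality $a \ge b \ge 0$, the boundary case $b=0$ is immediate; otherwise, divide by $b^{p+\alpha-1}$ and set $t := a/b \in [1,\infty)$. Writing $\gamma := (p+\alpha-1)/p$, inequality \eqref{alge3} reduces to showing
$$
\phi(t) := \frac{(t-1)^{p-1}(t^\alpha-1)}{(t^\gamma-1)^p} \ge c_3 > 0 \qquad \text{for every } t>1,
$$
and \eqref{alge2}, under the extra hypothesis $\alpha \ge 1$, to
$$
\psi(t) := \frac{(t+1)^{\alpha-1}(t-1)^p}{(t^\gamma-1)^p} \le c_4 < \infty \qquad \text{for every } t>1.
$$
Taylor expansion at $t=1$ gives $\phi(t) \to \alpha/\gamma^p$ and $\psi(t) \to 2^{\alpha-1}/\gamma^p$, while comparing leading powers as $t\to\infty$ shows that both ratios tend to $1$. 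Continuity of $\phi$ and $\psi$ on $[1,\infty)$ (after extending by these limits at the endpoint) then furnishes the uniform constants $c_3, c_4$ depending only on $p$ and $\alpha$.

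Finally, \eqref{general} and \eqref{general00} follow from the monotone and $1$-Lipschitz properties of $T_k$ and $G_k = \mathrm{id}-T_k$. By symmetry one may assume $a\ge b$, so $a-b\ge 0$ and both $T_k(a)-T_k(b)$ and $G_k(a)-G_k(b)$ lie in $[0,a-b]$. The target \eqref{general} then becomes $(a-b)^{p-1}\bigl(T_k(a)-T_k(b)\bigr) \ge \bigl(T_k(a)-T_k(b)\bigr)^p$, which is trivial if $T_k(a)=T_k(b)$ and otherwise equivalent to $(a-b)^{p-1} \ge (T_k(a)-T_k(b))^{p-1}$, a direct consequence of the Lipschitz bound; the argument for $G_k$ is identical. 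The main obstacle is the bookkeeping in the middle step: one must carefully verify that $\phi$ and $\psi$ extend continuously to $t=1$ with the claimed positive (resp. finite) limits and that the leading-order cancellations near $t=1$ are handled correctly, after which compactness on $[1,\infty]$ delivers the constants.
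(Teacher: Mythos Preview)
Your proposal is correct and follows precisely the ``suitable rescaling argument'' that the paper invokes without spelling out: the paper gives no detailed proof of this lemma, merely remarking that the inequalities follow by rescaling, which is exactly your homogeneity reduction $t=a/b$ for \eqref{alge3}--\eqref{alge2}. Your treatments of \eqref{alge1} via convexity/subadditivity and of \eqref{general}--\eqref{general00} via the monotone $1$-Lipschitz property of $T_k$ and $G_k$ are the standard arguments and are fine.
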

\section{Existence  of a weak solution}\label{sec1}

The main result of this section is the following.
\begin{Theorem}\label{th1}
Assume that $(f,u_0)\in L^1(\O_T)\times L^1(\O)$, then problem \eqref{eq:def} has a weak solution $u$ such that
$T_k(u)\in L^p(0,T; W^{s,p}_{ 0}(\Omega))$ for all $k>0$. Moreover, for all $q<\frac{N(p-1)+ps}{N+s}$ and for all
$s_1<s$, we have
\begin{equation}\label{estimm}
\int_0^T\iint_{\O\times \O}\dfrac{|u(x,t)-u(y,t)|^{q}}{|x-y|^{N+qs_1}}\,dy
\ dx\ dt\le M.
\end{equation}
If $p>2-\frac{s}{N}$, then $u\in L^p(0,T; W^{s_1,q}_{ 0}(\O))$ for all $1\le q<\frac{N(p-1)+ps}{N+s}$ and for all
$s_1<s$.
\end{Theorem}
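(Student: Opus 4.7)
The strategy is the SOLA (\emph{solution obtained as limit of approximations}) method. Approximate the data by $f_n=T_n(f)\in L^\infty(\O_T)$ and $u_{0,n}=T_n(u_0)\in L^\infty(\O)$, which converge to $f$ and $u_0$ in $L^1$. For each $n$, classical monotone operator theory (\cite{Lio}) yields an energy solution $u_n$ in the sense of Definition \ref{energy}. The task is to produce uniform estimates on $u_n$, pass to a limit $u$, and identify $u$ as a weak solution satisfying the stated regularity.

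For the a priori estimates, I would use $T_k(u_n)$ as a test function and apply the algebraic inequality \eqref{general}. Writing $\Theta_k(s)=\int_0^s T_k(\sigma)\,d\sigma\ge 0$, one gets
\begin{equation*}
\int_\O \Theta_k(u_n(x,T))\,dx+\tfrac12\int_0^T\!\!\iint_{D_\O}\frac{|T_k(u_n(x,t))-T_k(u_n(y,t))|^p}{|x-y|^{N+ps}}\,dx\,dy\,dt\le k\bigl(\|f\|_{L^1}+\|u_0\|_{L^1}\bigr),
\end{equation*}
so $\|T_k(u_n)\|_{L^p(0,T;W_0^{s,p}(\O))}^p\le Ck$. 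A Boccardo--Gallouet-type level-set argument, adapted to the nonlocal framework as in \cite{KMS, AAB} and combined with the fractional Sobolev embedding (Theorem \ref{Sobolev}), then yields Marcinkiewicz estimates for $u_n$ and for the nonlocal increment $(x,y,t)\mapsto |u_n(x,t)-u_n(y,t)|/|x-y|^{N/q+s_1}$, from which the uniform bound \eqref{estimm} for the $u_n$ follows via the Marcinkiewicz--Lebesgue inclusion. When $p>2-s/N$, the same Marcinkiewicz bounds are strong enough to upgrade the time integrability from $L^q$ to $L^p$, giving the final assertion $u_n\in L^p(0,T;W_0^{s_1,q}(\O))$ uniformly.

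To pass to the limit, I would use the equation to bound $\partial_t u_n$ in $L^1(\O_T)+L^{p'}(0,T;W^{-s,p'}(\O))$, then apply a fractional Aubin--Simon compactness lemma to extract a subsequence with $u_n\to u$ in $L^1(\O_T)$ and a.e.\ in $\O_T$. Weak limits of $T_k(u_n)$ in $L^p(0,T;W_0^{s,p}(\O))$ are identified with $T_k(u)$, so the truncations of $u$ lie in the correct space, and \eqref{estimm} for $u$ follows by Fatou's lemma. The initial condition $u(\cdot,0)=u_0$ is recovered from the uniform energy estimate and the $L^1$-continuity in time inherited from $u_n$.

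\emph{The main obstacle} is the strong convergence of the truncations $T_k(u_n)\to T_k(u)$ in $L^p(0,T;W_0^{s,p}(\O))$, which is needed to identify the nonlinear nonlocal term $U_n(x,y,t)$ and close the weak formulation. I would handle this via a Minty--Browder-type monotonicity argument, exploiting inequality \eqref{alge3} for $(-\Delta)^s_p$, combined with a Landes-type time regularization of $u$ to compensate for the lack of regularity of $\partial_t u$. The nonlocal character of the operator adds bookkeeping on the domain $D_\O$ but does not alter the structure of the argument. Once strong convergence of truncations is in hand, Vitali's theorem (whose equi-integrability hypothesis is furnished by the Marcinkiewicz estimates) allows passage to the limit in the nonlocal term and yields the weak solution.
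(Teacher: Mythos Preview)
Your overall SOLA strategy, the approximation scheme, and the use of $T_k(u_n)$ as a test function to obtain $\|T_k(u_n)\|_{L^p(0,T;W_0^{s,p})}^p\le Ck$ all match the paper. The divergence begins after that.

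\textbf{The ``main obstacle'' you name is not there.} You claim that strong convergence $T_k(u_n)\to T_k(u)$ in $L^p(0,T;W_0^{s,p}(\O))$ is needed to identify the nonlinear term, and propose a Minty--Browder argument combined with Landes time-regularization to obtain it. The paper needs none of this. Because the weak formulation in Definition~\ref{weak} is tested only against $\phi\in\mathcal C_0^\infty(\O_T)$, it suffices to show that $U_n\Phi\to U\Phi$ in $L^1(D_\O\times(0,T),d\nu\,dt)$, where $\Phi(x,y,t)=\phi(x,t)-\phi(y,t)$. The paper obtains this from pointwise convergence plus equi-integrability: Lemma~\ref{lm:l1} shows $\|u_n\|_{\mathcal M^{p_1}(\O_T)}\le C$ with $p_1=p-1+\frac{ps}{N}$, whence $u_n^{p-1}\to u^{p-1}$ strongly in $L^\sigma(\O_T)$ for every $\sigma<1+\frac{ps}{(p-1)N}$ by Vitali. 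On $\O\times\O$ this (together with the increment estimate of Lemma~\ref{lm:l11}) gives the convergence of $U_n\Phi$; on the cross terms $\O\times(\ren\setminus\O)$ the compact support of $\phi$ keeps the kernel $|x-y|^{-(N+ps)}$ bounded, so one is reduced to dominated convergence controlled by $(|u_n|^{p-1}+|u|^{p-1})|\phi|$. No monotonicity or time-regularization is invoked.

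\textbf{The estimate \eqref{estimm} is underspecified in your plan.} You defer it to ``a Boccardo--Gallouet-type level-set argument'', but the paper's Lemma~\ref{lm:l11} is the technical core of the proof and is not a straightforward nonlocal transcription of Boccardo--Gallou\"et. It uses the specific test function $w_n=1-\dfrac{1}{(u_n^++1)^\alpha}$, the algebraic inequalities \eqref{alge2}--\eqref{alge3}, a H\"older splitting that introduces an auxiliary exponent $q_1<q$ with $s_1=\frac{q_1}{q}s$, and an explicit radial kernel computation (the function $K_\theta(\sigma)$ from \cite{FV,G}) to control the resulting singular integrals. Your Marcinkiewicz route for the increment is plausible but would need to be written out; it is not what the paper does.

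Finally, for the time-compactness you propose Aubin--Simon. The paper instead shows directly that $\{u_n\}$ is Cauchy in $C([0,T];L^1(\O))$ by testing the equation for $u_n-u_m$ with $T_1(u_n-u_m)$ (Lemma~\ref{tt}); this is both simpler and immediately yields $u\in C([0,T];L^1(\O))$, which your approach recovers only indirectly.
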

To prove Theorem \ref{th1} we proceed by approximation.
Define $f_n=T_n(f)$ and $u_{0n}=T_n(u_0)$, then $(f_n,u_{0n})\in L^\infty(\O_T)\times L^\infty(\O)$ and $(f_n,u_{0n})\nearrow (f,u_0)$ strongly in $ L^1(\O_T)\times L^1(\O)$. Let $u_n$ be the unique solution to following approximated problem
\begin{equation}\label{pro:lineal1}
\left\{\begin{array}{rcll}
u_{nt}+(-\D^s_{p})u_n & = & f_n(x,t) &\mbox{ in }\O_T,\\
u_n & = & 0 & \hbox{  in } \ren\backslash\O\times (0,T),\\
u_n(x,0) & = & u_{0n}(x) & \mbox{ in }\O.
\end{array}
\right.
\end{equation}
Notice that the existence of $u_n$ follows using a direct modification of the classical result of \cite{Lio}.
Let us begin by proving the next a priori estimate.
\begin{Lemma}\label{lm:l1}
Consider the sequence $\{u_n\}_n$ defined as above, then $||u_n||_{\mathcal{M}^{p_1}(\O_T)}\le C$ for all $n$, where $p_1=p-1+\frac{ps}{N}$. In particular, for all $q<1+\frac{ps}{(p-1)N}$, we have
$$
||u^{p-1}_n||_{L^{q}(\O_T)}\le C\mbox{  for all }n.
$$
\end{Lemma}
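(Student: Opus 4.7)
The plan is to adapt the Boccardo-Gallouet truncation strategy to the fractional parabolic setting: use $T_k(u_n)$ as test function to obtain an ``energy estimate on truncations'' at cost linear in $k$, derive an $L^\infty(0,T;L^1(\Omega))$ bound as a corollary, combine the two via fractional Sobolev plus Hölder interpolation to obtain an $L^r(\O_T)$ bound on $T_k(u_n)$ linear in $k$, and then read off the Marcinkiewicz estimate from the distribution function.

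First, I would test the approximate equation \eqref{pro:lineal1} with $T_k(u_n)\chi_{[0,t^*]}$. The time derivative produces the non-negative primitive $\Theta_k(s)=\int_0^s T_k(\sigma)\,d\sigma$, while inequality \eqref{general} of Lemma \ref{algg} controls the nonlocal form from below, giving
$$
\int_{\Omega}\Theta_k(u_n(t^*))\,dx+\tfrac{1}{2}\int_0^{t^*}\!\!\iint_{D_\Omega}\frac{|T_k(u_n(x,\tau))-T_k(u_n(y,\tau))|^p}{|x-y|^{N+ps}}\,dx\,dy\,d\tau\le k\bigl(\|f\|_{L^1(\O_T)}+\|u_0\|_{L^1(\Omega)}\bigr).
$$
Specializing to $k=1$ and using $\Theta_1(s)\ge |s|/2$ for $|s|\ge 1$ yields the uniform bound $\|u_n\|_{L^\infty(0,T;L^1(\Omega))}\le C$. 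Keeping $k$ free and applying the fractional Sobolev inequality (Theorem \ref{Sobolev}, extending by zero and using Remark \ref{equiv}) converts the energy estimate into
$$
\int_0^T\!\!\Bigl(\int_\Omega |T_k(u_n)|^{p_s^*}\,dx\Bigr)^{p/p_s^*}\,dt\le C\,k.
$$

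Second, I would interpolate in space. Pointwise in $t$, Hölder with the conjugate pair $\bigl(\tfrac{1}{r-p},\tfrac{p_s^*}{p}\bigr)$ gives
$$
\|T_k(u_n)(\cdot,t)\|_{L^r(\Omega)}^r\le \|T_k(u_n)(\cdot,t)\|_{L^1(\Omega)}^{r-p}\,\|T_k(u_n)(\cdot,t)\|_{L^{p_s^*}(\Omega)}^p,
$$
provided $r-1=p\bigl(1-1/p_s^*\bigr)$, i.e.\ exactly $r=1+p_1=p(N+s)/N$. Integrating in $t$ and plugging in the two previous bounds produces
$$
\iint_{\O_T} |T_k(u_n)|^{r}\,dx\,dt\le C\,k.
$$
Since $|T_k(u_n)|\equiv k$ on $\{|u_n|>k\}$, this inequality forces $k^{r}\mu\{|u_n|>k\}\le C\,k$, i.e.\ $\mu\{|u_n|>k\}\le Ck^{-p_1}$, which is the claimed Marcinkiewicz bound $\|u_n\|_{\mathcal{M}^{p_1}(\O_T)}\le C$. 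The second statement follows since $\mu\{|u_n|^{p-1}>\lambda\}=\mu\{|u_n|>\lambda^{1/(p-1)}\}\le C\lambda^{-p_1/(p-1)}$, so $u_n^{p-1}\in \mathcal{M}^{p_1/(p-1)}(\O_T)$; because $\O_T$ has finite measure, the standard embedding $\mathcal{M}^\alpha\hookrightarrow L^q$ for $q<\alpha$ yields $\|u_n^{p-1}\|_{L^q(\O_T)}\le C$ for every $q<p_1/(p-1)=1+ps/((p-1)N)$.

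The main obstacle I expect is the first step: rigorously justifying $T_k(u_n)$ as an admissible test function and the Mignot-Bamberger chain rule $\int_0^{t^*}\langle u_{n\tau},T_k(u_n)\rangle\,d\tau=\int_\Omega\Theta_k(u_n(t^*))-\int_\Omega\Theta_k(u_{0n})$. In the local case this is textbook, but in the fractional $p$-Laplacian setting one needs to argue by density/time-regularization, using that $u_n$ is an energy solution (because $f_n,u_{0n}\in L^\infty$, so standard monotone operator theory à la \cite{Lio} applies) and that $T_k$ operates on $L^p(0,T;W^{s,p}_0(\Omega))$. Once this chain rule and inequality \eqref{general} are in place, the remaining Sobolev-Hölder interpolation is completely routine.
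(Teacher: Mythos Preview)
Your proof is correct and follows essentially the same Boccardo--Gallou\"et strategy as the paper: test with $T_k(u_n)$, obtain the energy estimate at cost linear in $k$ together with the $L^\infty(0,T;L^1(\Omega))$ bound, apply the fractional Sobolev inequality, and interpolate via H\"older to get $\iint_{\O_T}|T_k(u_n)|^{r}\le Ck$, from which the Marcinkiewicz estimate follows. The only difference is cosmetic: you go directly to the optimal exponent $r=1+p_1$, whereas the paper introduces auxiliary parameters $r_1,r_2$ and recovers $p_1$ by letting $r_1\to p$; your route is slightly cleaner but the content is identical.
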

\begin{proof}
Taking $T_k(u_n)$ as a test function in the problem \eqref{pro:lineal1}, it follows that
\begin{eqnarray*}
&\dyle \iint_{\O_T}u_{nt}T_k(u_n(x,t))\ dx\ dt+\iint_{\O_T}(-\D^s_{p})\, u_n(x,t)[T_k(u_n(x,t))]\ dx\ dt\\
&=\dyle \iint_{\O_T} f_n(x,t)[T_k(u_n(x,t))] \, dx\ dt\le Ck.
\end{eqnarray*}
Integrating by part, we reach that
\begin{eqnarray*}
&\dyle \int_{\O}\Theta _k(u_n(x,T))\ dx+\dyle \frac{1}{2}\int_0^T\iint_{D_\O}\,U_n(x,y,t)[T_k(u_n(x,t))-T_k(u_n(y,t))]d\nu\, dt\\
& \le \dyle ck+\int_{\O}\Theta _k(u_{0n}(x))\ dx\le c k+k \int_{\O}|u_{0}(x)|\ dx\\
&\le C_1k,
\end{eqnarray*}
where $\theta(\s)=\dyle\int_0^\sigma T_k(\tau)d\tau$.

Thus, using inequality \eqref{general} and the above estimate, it follows that
$$
\sup_{t\in [0,T]}\dyle \int_{\O}\Theta _k(u_n(x,t))\ dx\\
+\dyle \frac{1}{2}\int_0^T\iint_{D_\O}
|T_k(u_n(x,t))-T_k(u_n(y,t))|^{p}d\nu\,dt\le M k.$$
Then, up to a subsequence, there exists a measurable function $u$ such that $T_k(u_n)\rightharpoonup T_k(u)$ weakly in $L^p((0,T); W^{s,p}_0(\Omega))$ and $u_n\to u$ a.e in $\O_T$.

By the Sobolev inequality, we get
\begin{eqnarray*}
& \dyle \int_{0}^T\Big(\int_{\O}|T_k(u_n(x,t))|^{p^*_{s}}\ dx\Big)^{\frac{p}{p^*_s}}\ dt \\
&\le \dyle \int_0^T\iint_{D_\O}|T_k(u_n(x,t))-T_k(u_n(y,t))|^{p}d\nu\ dt \le M k.
\end{eqnarray*}
Hence
  $$\int_{0}^T \left(\dyle \int_{\O}
|T_k(u_n(x,t))|^{p^*_{s}} dx\right)^{\frac{1}{p^*_{s}}}\ dt \le
C(Mk)^{1/p}. $$
Let $1<r<p^*_s$ and define $r_1=\big(\frac{p^*_{s}}{p^*_{s}-1}\big)(r-1) $, $r_2=1-\frac{r_1}{p^*_{s}}$, where  $r=r_1+r_2$.

Fix $t_1<T$, then
\begin{eqnarray*}
&\dyle \int_{0}^{t_1}\int_{\O}|T_k(u_n(x,t))|^{r}\ dx \ dt \le  \int_{0}^{t_1}\int_{\O}|T_k(u_n(x,t))|^{r_1}|u_n(x,t)|^{r_2}\ dx\ dt\\
&\le \dyle \int_{0}^{t_1}\Big(\int_{\O}|T_k(u_n(x,t))|^{p^*_{s}}\ dx\Big)^{\frac{r_1}{p^*_s}}\Big(\int_{\O}|u_n(x,t)|^{r_2(\frac{p^*_{s}}{r_1})'}\ dx\Big)^{1-\frac{r_1}{p^*_s}}\ dt\\
&\dyle \le \int_{0}^{t_1}\Big(\int_{\O}|T_k(u_n(x,t))|^{p^*_{s}}\ dx\Big)^{\frac{r_1}{p^*_s}}\Big(\int_{\O}|u_n(x,t)|^{r_2\frac{p^*_{s}}{p^*_{s}-r_1}}\ dx\Big)^{1-\frac{r_1}{p^*_s}}\ dt\\
&\dyle \le\Big(\underset{t\in[0,T]}{\sup }\int_{\O}|u_n(x,t)| dx \Big)\int_{0}^T\Big(\int_{\ren}|T_k(u_n(x,t))|^{p^*_{s}}\ dx\Big)^{\frac{r_1}{p^*_s}} dt\\
&\dyle \le c \int_{0}^T\Big(\int_{\ren}|T_k(u_n(x,t))|^{p^*_{s}}\ dx\Big)^{\frac{r_1}{p^*_s}}\ dt\le  c Mk^{\frac{r_1}{p}}\le C k^{\frac{r_1}{p}}.
\end{eqnarray*}
Thus
$$
\dyle \iint_{\O_T}|T_k(u_n(x,t))|^{r}\ dx \ dt \le C k^{\frac{r_1}{p}}.
$$
Now, using the fact that $|\{|u_n|>k\}|=|\{|T_k(u_n)|=k\}|$, we obtain that
$$ k^{r}T\Phi_u\{(x,t)\in \O_T
:|u|>k\}\le\iint_{\O_T}|T_k(u_n(x,t))|^{r}\ dx \ dt \le TC k^{\frac{r_1}{p}}.$$
Hence
 $$\Phi_u\{(x,t)\in \O_T
:|u_n|>k\}\le C k^{-(r-\frac{r_1}{p})}\le C k^{-\a},$$
where $\a=1+r_1[\frac{p^*_{s}(p-1)-p}{pp^*_{s}}]$. Letting $r_1\rightarrow p$, it follows that $\a\to 1+[\frac{p^*_{s}(p-1)-p}{p^*_{s}}]= [\frac{p(p^*_{s}-1)}{p^*_{s}}]$.

Thus
$ \Phi_u\{(x,t)\in \O_T :|u_n|>k\}\le C M^{\frac{p^*_{s}}{p}}k^{-p_1}$ where $p_1=p-1+\frac{ps}{N}$. Hence $||u_n||_{\mathcal{M}^{p_1}(\O_T)}\le C$ for all $n$, and the result follows.

 By the previous estimates and using the Vitali lemma it holds that $u^{p-1}_n\to u^{p-1}$ strongly in $L^q(\O_T)$ for all $q<1+\frac{ps}{(p-1)N}$.
\end{proof}

\

We prove now that the sequence $\{u_n\}_n$ is bounded in a suitable fractional Sobolev spaces, more precisely we have the following result.
\begin{Lemma}\label{lm:l11}
Let $\{u_n\}_n$ defined as above, then for all $q<p_2=\frac{N(p-1)+ps}{N+s}$ and for all $s_1<s$, we have
\begin{equation}\label{eq:eq1}
\int_0^T\iint_{\O\times \O}\dfrac{|u_n(x,t)-u_n(y,t)|^{q}}{|x-y|^{N+qs_1}}d\nu\ dt\le M.
\end{equation}
In particular, if $p>\frac{2N+s}{N+s}$, then $\{u_n\}_n $ is bounded in $ L^q(0,T; W^{s_1,q}_0(\O))$ for all
$1<q<p_2=\frac{N(p-1)+ps}{N+s}$.
\end{Lemma}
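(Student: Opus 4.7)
The plan is to derive a \emph{weighted} Gagliardo-type estimate from \eqref{pro:lineal1} by using a bounded $C^1$ approximation of the identity as test function, and then to interpolate this weighted control against the Marcinkiewicz bound of Lemma \ref{lm:l1} via Hölder's inequality.

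For $\theta>0$ to be chosen later, set $\phi(\sigma)=\int_0^\sigma (1+|\tau|)^{-(\theta+1)}\,d\tau$; then $\|\phi\|_\infty\le 1/\theta$ and its primitive $\Psi(\sigma)=\int_0^\sigma \phi(\tau)\,d\tau$ is non-negative. Taking $\phi(u_n)$ as test function in \eqref{pro:lineal1}, the parabolic term contributes $\int_\O[\Psi(u_n(T))-\Psi(u_{0n})]\,dx$, controlled by $\|u_0\|_{L^1}/\theta$, and the right-hand side is bounded by $\|f\|_{L^1}/\theta$. For the elliptic term, the mean value theorem applied to $\phi$ combined with $\max(|a|,|b|)\le |a|+|b|$ yields the pointwise inequality
$$
|a-b|^{p-2}(a-b)\bigl(\phi(a)-\phi(b)\bigr)\ge \frac{|a-b|^p}{(1+|a|+|b|)^{\theta+1}},
$$
from which one extracts the weighted uniform bound
$$
\int_0^T\iint_{D_\O}\frac{|u_n(x,t)-u_n(y,t)|^p}{(1+|u_n(x,t)|+|u_n(y,t)|)^{\theta+1}}\,\frac{dx\,dy\,dt}{|x-y|^{N+ps}}\le C(\theta).
$$

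Next, decompose the target integrand as
$$
\frac{|u_n(x,t)-u_n(y,t)|^q}{|x-y|^{N+qs_1}}
=\Bigl[\frac{|u_n(x,t)-u_n(y,t)|^p}{(1+|u_n(x,t)|+|u_n(y,t)|)^{\theta+1}\,|x-y|^{N+ps}}\Bigr]^{q/p}\!\cdot\frac{(1+|u_n(x,t)|+|u_n(y,t)|)^{(\theta+1)q/p}}{|x-y|^{\alpha}},
$$
with $\alpha=N(1-q/p)+q(s_1-s)$, and apply Hölder in $(x,y,t)$ with exponents $p/q$ and $p/(p-q)$. The first factor is bounded by the previous weighted estimate. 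For the second, one checks $\alpha\, p/(p-q)=N+pq(s_1-s)/(p-q)<N$ because $s_1<s$, so Fubini together with $\int_\O|x-y|^{-\alpha p/(p-q)}\,dy\le C$ reduces that factor to a multiple of $\int_0^T\int_\O(1+|u_n|)^{(\theta+1)q/(p-q)}\,dx\,dt$. By the Marcinkiewicz embedding in Lemma \ref{lm:l1}, this is uniformly bounded provided
$$
\frac{(\theta+1)q}{p-q}<p_1=p-1+\frac{ps}{N},
$$
and a positive $\theta$ with this property exists iff $q<pp_1/(1+p_1)$. The arithmetic $1+p_1=p(N+s)/N$ gives $pp_1/(1+p_1)=(N(p-1)+ps)/(N+s)=p_2$, which is exactly the sharp threshold in \eqref{eq:eq1}. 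For the final claim, the hypothesis $p>(2N+s)/(N+s)$ is equivalent to $p_2>1$, so any $q\in(1,p_2)$ is admissible; combining \eqref{eq:eq1} with the $L^q(\O_T)$-bound on $u_n$ (via Lemma \ref{lm:l1}, since $q<p_2<p_1$) yields uniform boundedness in $L^q(0,T;W^{s_1,q}_0(\O))$ through Remark \ref{equiv}.

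The main technical point is the choice of the weight in the test function: the exponent $\theta+1$ must simultaneously be small enough that $(1+|u_n|)^{(\theta+1)q/(p-q)}$ is integrable (Marcinkiewicz constraint) and large enough that the weighted seminorm dominates a genuine fractional $p$-Laplacian quantity. This joint constraint is precisely what pins down the sharp exponent $p_2=(N(p-1)+ps)/(N+s)$.
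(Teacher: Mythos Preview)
Your argument is correct and follows the same overall strategy as the paper: obtain a weighted Gagliardo estimate from a bounded monotone test function, then interpolate via H\"older against the Marcinkiewicz bound of Lemma~\ref{lm:l1}. The execution differs in two places worth noting. First, the paper tests with $1-(u_n^++1)^{-\alpha}$ (and its analogue for $u_n^-$), which forces a separate treatment of the positive and negative parts and requires the algebraic inequalities \eqref{alge2}--\eqref{alge3} to pass from the resulting expression to a usable weighted seminorm; your odd test function $\phi(\sigma)=\int_0^\sigma(1+|\tau|)^{-(\theta+1)}d\tau$ handles both signs at once and yields the weighted estimate directly from the mean value theorem. Second, after H\"older the paper carries out an explicit radial computation (the $K_\theta$ kernel) to control $\int_\Omega|x-y|^{-(N-\theta)}dy$, whereas you observe that $s_1<s$ makes the exponent $\alpha\,p/(p-q)$ strictly less than $N$, so this integral is trivially bounded on the bounded domain $\Omega$ and Fubini suffices. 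Both routes land on the same arithmetic constraint $q<pp_1/(1+p_1)=p_2$; yours is the more economical path.
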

\begin{proof} In what follows, we denote by $C,C_1, C_2,...,$ any positive constants that are independent of $\{u_n\}_n$ and can
change from one line to another.

We follow closely the argument used in \cite{AAB}. Define $$w_n(x,t)=1-\dfrac{1}{(u^+_n(x,t)+1)^{\a}},$$ where $\a>0$ to be chosen later, then using  $w_n$ as a test function in \eqref{pro:lineal1}, we get

\begin{eqnarray*}
&\dyle \int_0^T
\int_{\O} u_{nt}w_n(x,t)\ dx\ dt+\int_0^T\iint_{D_{\O}}\,U_n(x,y,t)w_n(x,t)d\nu\, dt\\
&\le\dyle\iint_{\O_T}|f_n(x,t)|w_n\, dx\,dt\le C.
\end{eqnarray*}
Integrating by part we  find that,
\begin{eqnarray*}
&\dyle \dyle\iint_{\O_T} u_{nt}(x,t)w_n(x,t)\ dx\ dt=\int_{\O} u^+_{n}(x,T)\ dx-\int_{\O} u^+_{0n}(x)dx\\
&+\dyle \frac{1}{1+\a}\int_{\O}\Big[\frac{1}{(u^+_n(x,T)+1)^{\a+1}}-\frac{1}{(u^+_{n0}(x)+1)^{\a+1}}\Big]\ dx\mbox{  if }\a\neq 1,
\end{eqnarray*}
and \begin{eqnarray*}
&\dyle \dyle\iint_{\O_T} u_{nt}(x,t)w_n(x,t)\ dx\ dt=\int_{\O} u^+_{n}(x,T)\ dx-\int_{\O} u^+_{0n}(x)\ dx \\
&+\dyle \int_{\O}\Big[\log(u^+_n(x,T)+1)-\log(u^+_{n0}(x)+1)\Big]\ dx\mbox{  if  }\a=1.
\end{eqnarray*}
Hence, in any case, since $\dyle\underset{t\in[0,T]}{\sup }\int_{\O}|u_n(x,t)| dx\le C$ for all $n$, it follows that
\begin{eqnarray*}
&\dyle \dyle\iint_{\O_T} u_{nt}(x,t)w_n(x,t)\ dx\ dt\ge \int_{\O} u^+_{n}(x,T)\ dx-C.
\end{eqnarray*}
We deal now with the term
$$
\int_0^T\iint_{D_{\O}}\,|u_n(x,t)-u_n(y,t)|^{p-2}(u_n(x,t)-u_n(y,t))w_n(x,t)d\nu\,dt.
$$
Let $v_n=u^+_n+1 $ and define $\tilde{V}_n(x,y,t)=(v_n(x,t)-v_n(y,t))$. Taking into consideration that
\begin{eqnarray*}
&|u_n(x,t)-u_n(y,t)|^{p-2}(u_n(x,t)-u_n(y,t))\Big((u^+_n(x,t)+1)^{\a}-(u^+_n(y,t)+1)^{\a}\Big)\ge
\\
&
|u^+_n(x,t)-u^+_n(y,t)|^{p-2}(u^+_n(x,t)-u^+_n(y,t))\Big((u^+_n(x,t)+1)^{\a}-(u^+_n(y,t)+1)^{\a}\Big)=\\
&|\tilde{V}_n(x,y,t)|^{p-2}\tilde{V}_n(x,y,t)\Big(v^\a_n(x,t)-v^\a_n(y,t)\Big),
\end{eqnarray*}
it follows that
\begin{equation}\label{R3}
\begin{array}{lll}
& \dyle \int_0^T\iint_{D_{\O}}\,|u_n(x,t)-u_n(y,t)|^{p-2}(u_n(x,t)-u_n(y,t))w_n(x,t)d\nu\,dt\ge \\
&\dyle \int_0^T\iint_{D_{\O}} \,|\tilde{V}_n(x,y,t)|^{p-2}\tilde{V}_n(x,y,t)\bigg(\frac{v_n^{\a}(x,t)-v_n^{\a}(y,t)}{v_n^{\a}(y,t)v_n^{\a}(x,t)}\bigg)d\nu\ dt.
\end{array}
\end{equation}
Using inequality \eqref{alge3} and by \eqref{R3}, it holds,
\begin{equation}\label{R5}
\int_0^T\iint_{D_\O} \,\frac{|v_n^{\frac{p+\a-1}{p}}(x,t)-v_n^{\frac{p+\a-1}{p}}(y,t)|^{p}}{v_n^{\a}(y,t)v_n^{\a}(x,t)}d\nu\ dt\le C.
\end{equation}
Fix $q<p_2$ and $s<s_1$, then there exists $q_1<q$ such that $s_1=\frac{q_1}{q}s$. Therefore we get
\begin{eqnarray*}
&\dyle \int_0^T\iint_{\O\times\O}\dfrac{|v_n(x,t)-v_n(y,t)|^{q}}{|x-y|^{N+qs_1}}dxdydt\\ \\
&\dyle =\dyle \int_0^T\iint_{\O\times\O}\dfrac{|\tilde{V}_n(x,y,t)|^{q}}{|x-y|^{qs}}
\frac{(v_n(x,t)+v_n(y,t))^{\a-1}}{(v_n(x,t)v_n(y,t))^{\a}}\frac{(v_n(x,t)v_n(y,t))^{\a}}{(v_n(x,t)+v_n(y,t))^{\a-1}}\dfrac{|x-y|^{(q-q_1)s}}{|x-y|^{N}}dxdydt\\ \\
&\dyle \le\Big(\int_0^T\iint_{\O\times\O}\dfrac{|\tilde{V}_n(x,y,t)|^{p}(v_n(x,t)+v_n(y,t))^{\a-1}}{|x-y|^{N+ps}(v_n(x,t)v_n(y,t))^{\a}}dxdydt\Big)^{\frac{q}{p}}\times \\ \\
&\dyle \Big(\int_0^T\iint_{\O\times\O}\frac{(v_n(x,t)+v_n(y,t))^{\a-1}}{(v_n(x,t)v_n(y,t))^{\a}}\frac{(v_n(x,t)+v_n(y,t))^{\frac{p\a}{p-q}}}{(v_n(x,t)+v_n(y,t))^{\frac{p(\a-1)}{p-q}}}\dfrac{|x-y|^{\frac{p(q-q_1)s}{p-q}}}{|x-y|^{N}}dxdydt\Big)^{\frac{p-q}{p}}\\ \\
&\dyle \le\Big(\int_0^T\iint_{\O\times\O}\dfrac{|\tilde{V}_n(x,y,t)|^{p}\Big(v_n(x,t)+v_n(y,t)\Big)^{\a-1}}{|x-y|^{N+ps}(v_n(x,t)v_n(y,t))^{\a}}dxdydt \Big)^{\frac{q}{p}}\times \\ \\
&\dyle \Big(\int_0^T\iint_{\O\times\O}\Big(\frac{(v_n(x,t)v_n(y,t))^{\a}}{(v_n(x,t)+v_n(y,t))^{\a-1}}\Big)^\frac{q}{p-q}\dfrac{|x-y|^{\frac{p(q-q_1)s}{p-q}}dx\ dy\ dt}{|x-y|^{N}}\Big)^{\frac{p-q}{p}}.
\end{eqnarray*}
Using inequality \eqref{alge2} and by \eqref{R5}, it follows that
\begin{eqnarray*}
& \dyle \Big(\int_0^T\iint_{\O\times\O}\dfrac{|\tilde{V}_n(x,y,t)|^{p}(v_n(x,t)+v_n(y,t))^{\a-1}}{|x-y|^{N+ps}(v_n(x,t)v_n(y,t))^{\a}} dxdydt\Big)^{\frac{q}{p}}\\
&\dyle \le\Big(\int_0^T\iint_{\O\times\O}\frac{|v_n^{\frac{p+\a-1}{p}}(x,t)-v_n^{\frac{p+\a-1}{p}}(y,t)|^{p}}{|x-y|^{N+ps}(v_n(x,t)v_n(y,t))^{\a}} dx\ dy\ dt\Big)^{\frac{q}{p}}\le C.
\end{eqnarray*}
So we obtain
\begin{eqnarray*}
& \dyle \int_0^T\iint_{\O\times\O}\dfrac{|\tilde{V}_n(x,y,t)|^{q}}{|x-y|^{N+qs_1}}dx\ dy\ dt\\
&\dyle \le
c\Big(\int_0^T\iint_{\O\times\O}\Big(\frac{v_n(x,t)v_n(y,t)}{v_n(x,t)+v_n(y,t)}\Big)^{\a\frac{q}{p-q}}\dfrac{(v_n(x,t)+v_n(y,t))^\frac{q}{p-q}}{|x-y|^{N-\frac{ps(q-q_1)}{p-q}}} dx \ dy\ dt\Big)^{\frac{p-q}{p}}.
\end{eqnarray*}
Using inequality \eqref{alge1}, we reach that
\begin{eqnarray*}
(v_n(x,t)+v_n(y,t))\bigg( \frac{v_n(x,t)v_n(y,t)}{v_n(x,t)+v_n(y,t)}\bigg)^{\a} &\le &  C_1 (v_n(x,t)+v_n(y,t))^{\a+1}\\
&\le & C_1v_n^{\a+1}(x,t)+C_2v_n^{\a+1}(y,t).
\end{eqnarray*}
Then,
\begin{equation}\label{main00}
\begin{array}{lll}
&\dyle \int_0^T\iint_{\O\times\O}\dfrac{|\tilde{V}_n(x,y,t)|^{q}}{|x-y|^{N+qs_1}}dx \ dy\ dt\\
&\dyle \le  c_1\Big(\int_0^T\iint_{\O\times\O}\dfrac{v_n^{\frac{(\a+1)q}{p-q}}(x,t) dx \ dy\ dt}{|x-y|^{N-\frac{ps(q-q_1)}{p-q}}}\Big)^{\frac{p-q}{p}} +  c_2\Big(\int_0^T\iint_{\O\times\O}\dfrac{v_n^{\frac{(\a+1)q}{p-q}}(y,t) dx \ dy\ dt}{|x-y|^{N-\frac{ps(q-q_1)}{p-q}}}\Big)^{\frac{p-q}{p}}.
\end{array}
\end{equation}
Since $\Omega$ is a bounded domain, we get the existence of $R>0$ such that $\Omega\subset\subset B_R(0)$. Hence
$$\int_0^T\iint_{\O\times\O}\dfrac{v^{\frac{(\a+1)q}{p-q}}_n(x,t)dx \ dy\, dt}{|x-y|^{N-\frac{ps(q-q_1)}{p-q}}}\le\int_0^T \int_{B_R(0)} v^{\frac{(\a+1)q}{p-q}}_n(x,t)dx\,dt\int_{B_R(0)}\dfrac{dy}{|x-y|^{N-\frac{ps(q-q_1)}{p-q}}}$$

To compute the last integral, we follow closely the radial computations in \cite{FV} and \cite{G}.
We set $r=|x|$ and $\rho=|y|$, then $x=rx', y=\rho y'$,
where $|x'|=|y'|=1$. Define $\kappa=\frac{(\a+1)q}{p-q}$ and $\theta=\frac{ps(q-q_1)}{p-q}$, it follows that
$$\int_0^T\iint_{\O\times\O}\dfrac{v_n^{\kappa}(x,t)dxdydt}{|x-y|^{N-\theta}}\le
\int_0^T\int_{B_R(0)}
v_n^{\kappa} (x,t) \ dx\ dt
\dint\limits_0^{R}\dfrac{\rho^{N-1}}{
r^{N-\theta}}\left(
\dint\limits_{|y'|=1}\dfrac{dH^{n-1}(y')}{|x'-\frac{\rho}{r}
y'|^{N-\theta}} \right) \,d\rho.$$
Setting $\sigma=\dfrac{\rho}{r}$, then

$$\int_0^T\iint_{\O\times\O}\dfrac{v_n^{\kappa}(x,t)dx \ dy\ dt}{|x-y|^{N-\theta}}\le
\int_0^T\int_{B_R(0)}
 v_n^{\kappa}(x,t)|x|^{\theta}\ dx\ dt \dint\limits_0^{\frac{R}{r}}\sigma^{N-1}
K_\theta(\sigma)\,d\sigma.
$$
where
\begin{equation}\label{kkk}
K_\theta(\sigma)=\dint\limits_{|y'|=1}\dfrac{dH^{n-1}(y')}{|x'-\s
y'|^{N-\theta}}=2\frac{\pi^{\frac{N-1}{2}}}{\beta(\frac{N-1}{2})}\int_0^\pi
\frac{\sin^{N-2}(\xi)}{(1-2\sigma \cos
(\xi)+\sigma^2)^{\frac{N-\theta}{2}}}d\xi.
\end{equation}
Therefore we conclude that
\begin{equation}\label{RTV}
\begin{array}{lll}
&\dyle \int_0^T\iint_{\O\times\O}\dfrac{v_n^{\kappa}(x,t)dx \ dy\ dt}{|x-y|^{N-\theta}}\le\int_0^T\int_{B_\frac{R}{3}(0)}
 v_n^{\kappa}(x,t)|x|^{\theta}\ dx\ dt \dint\limits_0^{\frac{R}{r}}\sigma^{N-1}
K_\theta(\sigma)\,d\sigma\\
&\dyle +\int_0^T\int_{B_R(0)\backslash B_\frac{R}{3}(0)}
 v_n^{\kappa}(x,t)|x|^{\theta}\ dx\ dt \dint\limits_0^{\frac{R}{r}}\sigma^{N-1}
K_\theta(\sigma)\,d\sigma.
\end{array}
\end{equation}
Recall that $r=|x|$, then if $x\in B_R(0)\backslash B_\frac{R}{3}(0)$, it holds $\frac{R}{r}<3$. Hence taking into consideration that $\theta>0$ and the behavior of $K_\theta$ near $1$, we reach that
$$
\dint\limits_0^{\frac{R}{r}}\sigma^{N-1}
K_\theta(\sigma)\,d\sigma\le \dint\limits_0^{3}\sigma^{N-1}
K_\theta(\sigma)\,d\sigma=C_1.
$$
Now, for $x\in  B_\frac{R}{3}(0)$,
\begin{eqnarray*}
\dint\limits_0^{\frac{R}{r}}\sigma^{N-1}
K_\theta(\sigma)\,d\sigma & = & \dint\limits_0^{3}\sigma^{N-1}
K_\theta(\sigma)\,d\sigma + \dint\limits_3^{\frac{R}{r}}\sigma^{N-1}
K_\theta(\sigma)\,d\sigma= C_1 + \dint\limits_3^{\frac{R}{r}}\sigma^{N-1}
K_\theta(\sigma)\,d\sigma\\
&\le & C_1+(\frac{R}{r})^a \dint\limits_3^{\frac{R}{r}}\sigma^{N-1-a}
K_\theta(\sigma)\,d\sigma\
\end{eqnarray*}
where $a>0$ to be chosen later.  It is clear that
$$
\dint\limits_3^{\frac{R}{r}}\sigma^{N-1-a}
K_\theta(\sigma)\,d\sigma\le \dint\limits_3^{\infty}\sigma^{N-1-a}
K_\theta(\sigma)\,d\sigma.
$$
Choosing $a>\theta$, it follows that $\dint\limits_3^{\infty}\sigma^{N-1-a}
K_\theta(\sigma)\,d\sigma=C_2<\infty$. Now, going back to \eqref{RTV}, it holds
\begin{equation}\label{RTV0}
\begin{array}{lll}
&  \dyle \int_0^T\iint_{\O\times\O}\dfrac{v_n^{\kappa}(x,t)dx \ dy\ dt}{|x-y|^{N-\theta}}\\ & \le
\dyle C_1 \int_0^T\int_{B_R(0)}
 v_n^{\kappa}(x,t)|x|^{\theta}\ dx\ dt
\dyle +C_2R^a\int_0^T\int_{B_\frac{R}{3}(0)}
 v_n^{\kappa}(x,t)|x|^{\theta-a}\ dx\ dt
\end{array}
\end{equation}
Recall that $\kappa=\frac{(\a+1)q}{p-q}$, since $q<\frac{(p-1)N+ps}{N+s}$, we can choose $\a>0$ such that $\kappa<p-1+\frac{ps}{N}$.
Hence, taking into consideration the result of Lemma \ref{lm:l1}, choosing $a$ very close to $\theta$ and using H\"older inequality, we deduce that
$$
\int_0^T\iint_{\O\times\O}\dfrac{v_n^{\frac{(\a+1)q}{p-q}}(x,t) dx \ dy\ dt}{|x-y|^{N-\frac{ps(q-q_1)}{p-q}}}\le C.
$$
In a symmetric way, we can show that
$$
\int_0^T\iint_{\O\times\O}\dfrac{v_n^{\frac{(\a+1)q}{p-q}}(y,t) dy \ dx\ dt}{|x-y|^{N-\frac{ps(q-q_1)}{p-q}}}\le C\mbox{  for all  }n.$$
Going back to \eqref{main00} and taking into consideration the previous estimates, we conclude that
\begin{equation}\label{last001}
\begin{array}{lll}
& \dyle\int_0^T\iint_{{\O\times\O}}\dfrac{|u^+_n(x,t)-u^+_n(y,t)|^{q}}{|x-y|^{N+qs_1}} dy \ dx\ dt\\
&\dyle=
\int_0^T\iint_{{\O\times\O}}\dfrac{|v_n(x,t)-v_n(y,t)|^{q}}{|x-y|^{N+qs_1}} dy \ dx\ dt\le C.
\end{array}
\end{equation}
In the same way and using $\Big(1-\dfrac{1}{(u^-_n(x,t)+1)^{\a}}\Big)$ as a
test function in \eqref{pro:lineal1}, it follows that
\begin{equation}\label{last002}
\dyle\int_0^T\iint_{{\O\times\O}}\dfrac{|u^-_n(x,t)-u^-_n(y,t)|^{q}}{|x-y|^{N+qs_1}} dy \ dx\ dt\le C.
\end{equation}
Combining the estimates \eqref{last001} and \eqref{last002}, we reach that
$$\int_0^T\int_\O\int_{\O}\dfrac{|u_n(x,t)-u_n(y,t)|^{q}}{|x-y|^{N+qs_1}}dx\,dy\le C.$$
Hence we conclude.
\end{proof}

To prove that $u\in \mathcal{C}([0,T], L^1(\O))$, we need the next lemma.
\begin{Lemma}\label{tt}
Let $\{u_n\}_n$ be defined as above, then $\{u_n\}_n$ converge strongly to $u$ in $C([0,T], L^1(\O))$.
\end{Lemma}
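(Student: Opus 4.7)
The plan is to prove the $L^1$-contraction estimate
$$
\sup_{t\in [0,T]}\|u_n(\cdot,t)-u_m(\cdot,t)\|_{L^1(\Omega)}\le \|u_{0n}-u_{0m}\|_{L^1(\Omega)}+\|f_n-f_m\|_{L^1(\Omega_T)},
$$
for any two approximating solutions $u_n,u_m$ defined in \eqref{pro:lineal1}. Since $(f_n,u_{0n})\to (f,u_0)$ strongly in $L^1(\Omega_T)\times L^1(\Omega)$, this immediately yields that $\{u_n\}$ is Cauchy in $C([0,T];L^1(\Omega))$, hence has a strong limit in this space. By Lemma~\ref{lm:l1} we already have $u_n\to u$ a.e., so the strong limit coincides with $u$ and the lemma follows.

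To get the contraction, I set $w=u_n-u_m$; then $w$ solves
$$
w_t+(-\D^s_p)u_n-(-\D^s_p)u_m=f_n-f_m \qquad\text{in }\Omega_T,
$$
with $w(\cdot,0)=u_{0n}-u_{0m}$. I would use $\frac{1}{k}T_k(w)$ as a test function (after a standard Steklov time-averaging of $w$ so that the time derivative is admissible). The parabolic term yields $\frac{1}{k}\int_\Omega \Theta_k(w(\cdot,t))\,dx-\frac{1}{k}\int_\Omega\Theta_k(w(\cdot,0))\,dx$ with $\Theta_k(s)=\int_0^s T_k(\tau)\,d\tau$, while the right-hand side is bounded by $\int_0^t\int_\Omega|f_n-f_m|\,dxds$ uniformly in $k$.

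The key step is the sign of the nonlocal term. For every $A,B\in\re$ and $k>0$ the scalar monotonicity
$$
\bigl(|A|^{p-2}A-|B|^{p-2}B\bigr)\,T_k(A-B)\ge 0
$$
holds, which applied with $A=u_n(x,t)-u_n(y,t)$ and $B=u_m(x,t)-u_m(y,t)$ gives
$$
\frac{1}{2}\iint_{D_\Omega}\bigl(U_n(x,y,t)-U_m(x,y,t)\bigr)\bigl(T_k(w(x,t))-T_k(w(y,t))\bigr)d\nu\ge 0,
$$
since $A-B=w(x,t)-w(y,t)$. Dropping this nonnegative term, dividing by $k$ and letting $k\to 0^+$, the pointwise convergence $\Theta_k(s)/k\nearrow |s|$ together with dominated convergence (controlled by $|w|\in L^\infty(0,T;L^1(\Omega))$, a bound already contained in Lemma~\ref{lm:l1}) produces the desired $L^1$-contraction at each $t\in[0,T]$; taking the supremum in $t$ closes the argument.

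The main obstacle is the lack of regularity of $w_t$, which prevents a direct use of $T_k(w)$ as a test function. This I handle in the standard way via Steklov averages $[w]_h=\tfrac{1}{h}\int_t^{t+h}w(\cdot,\tau)d\tau$: one derives the inequality with $T_k([w]_h)$, where $\partial_t[w]_h\in L^2$ makes the chain rule rigorous, and then sends $h\to 0$ exploiting the strong convergence $[w]_h\to w$ in $L^p(0,T;W^{s,p}_0(\Omega))$ and the continuity of $T_k$. Everything else is a routine combination of monotonicity of $(-\D^s_p)$ and the convergences of $f_n$ and $u_{0n}$ in $L^1$.
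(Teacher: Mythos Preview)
Your argument is correct and yields the same conclusion as the paper, but by a somewhat cleaner route. The paper tests with $T_1(u_n-u_m)$ (fixed $k=1$), obtains a bound on $\int_\Omega \Theta_1(u_n-u_m)(\cdot,t)\,dx$, and then converts this into an $L^1$ bound by splitting $\{|u_n-u_m|<1\}$ and $\{|u_n-u_m|\ge 1\}$ and using Cauchy--Schwarz on the first set. Your approach of testing with $\tfrac{1}{k}T_k(w)$ and letting $k\to 0^+$ produces the sharp $L^1$-contraction directly and avoids that splitting trick; both arguments are standard in the local theory. Note also that since $u_n,u_m$ are energy solutions with bounded data, $(u_n-u_m)_t\in L^{p'}(0,T;W^{-s,p'}(\Omega))$ and $T_k(u_n-u_m)\in L^p(0,T;W^{s,p}_0(\Omega))$, so the pairing and the chain rule for $\Theta_k$ are legitimate without Steklov averaging, although invoking it does no harm.

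One imprecision to fix: the scalar inequality you display involves $T_k(A-B)$, but the integrand coming from the weak formulation is $\bigl(U_n-U_m\bigr)\bigl(T_k(w(x,t))-T_k(w(y,t))\bigr)$, and in general $T_k(A-B)\neq T_k(w(x,t))-T_k(w(y,t))$. The nonnegativity you need still holds, but for a slightly different reason: since $s\mapsto|s|^{p-2}s$ is increasing, $U_n-U_m$ has the sign of $A-B=w(x,t)-w(y,t)$, and since $T_k$ is increasing, $T_k(w(x,t))-T_k(w(y,t))$ has the same sign; hence their product is nonnegative pointwise. This is precisely the monotonicity the paper invokes (via inequality~\eqref{general}), so just rewrite that step accordingly.
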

\begin{proof}
Let $m, n\in \ene$, then for all $\phi \in L^p([0,T]; W^{s,p}_0(\O))$,
\begin{eqnarray*}
&\dyle \iint_{\O_T} (u_n-u_m)_t(x,t)\phi(x,t) dx\ dt\\
&+\dyle \iint_{\Omega_T}\langle (-\D^s_{p})\, u_n(x,t)-(-\D^s_{p})\, u_m(x,t),
\phi(x,t)\rangle dxdt\\
&\dyle= \iint_{\O_T} (f_n-f_m)\phi dxdt.
\end{eqnarray*}
Let $\phi(x,t)=T_1(u_n-u_m)_{[0,t]}(x,t)$, with $t\leq T$, setting $\O_t=\O\times (0,t)$, we get
\begin{eqnarray*}
&\dyle \iint_{\O_t}\langle (u_n-u_m)_\tau(x,\tau) ,T_1(u_n-u_m)(x,\tau)\rangle dx\ d\tau\\
&\dyle +\iint_{\O_t}\langle(-\D^s_{p})\, u_n(x,\tau)- (-\D^s_{p})\, u_m(x,\tau),
T_1(u_n-u_m)\rangle dxd\tau\\
&\dyle= \iint_{\O_t}(f_n-f_m)(x,\tau)T_1(u_n-u_m)(x,\tau)\,dx\,d\tau\leq\iint_{\O_T} |f_n-f_m|  dxd\tau.
\end{eqnarray*}
It is clear that
$$\iint_{\O_T}\langle (u_n-u_m)_\tau(x,\tau) ,T_1(u_n-u_m)(x,\tau)\rangle dx\ d\tau= \io [\Theta_1(u_n-u_m)]^t_0(x,\tau)\,dx.  $$
Now, by inequality \eqref{general} we obtain that
$$\iint_{\Omega_t}\langle (-\D^s_{p})\, u_n(x,\tau)-(-\D^s_{p})\, u_m(x,\tau),
T_1(u_n-u_m)\rangle dxd\tau\ge 0.$$
Thus
$$\io [\Theta_1(u_n-u_m)](x,t)dx\le \io [\Theta_1(u_n-u_m)](x,0)dx+ \iint_{\O_T}|f_n-f_m|dx\,d\tau.$$
Recall that $\Theta_1(\s)\le |\s|$, thus, for all $t\le T$,
$$\io [\Theta_1(u_n-u_m)](x,t) \leq \io |u_{0n}-u_{0m}|dx +  \iint_{\O_T}|f_n-f_m|dx\,d\tau.$$
Denote $b_{n,m}$ the right hand side, thus
$$\int_{|u_n-u_m|<1}|u_n-u_m|^2(x,t)dx+ \int_{|u_n-u_m|>1}|u_n-u_m|(x,t)dx\le 2b_{n,m}.$$
Since
\begin{eqnarray*}
\int_{\O_T}|u_n-u_m|(x,t)dx & = & \int_{|u_n-u_m|<1}|u_n-u_m|(x,t)dx+ \int_{|u_n-u_m|>1} |u_n-u_m|(x,t)dx\\
&\le & \Big( \int_{|u_n-u_m|<1}|u_n-u_m|^2(x,t)dx\Big)^{\frac 12}|\O_T|^{\frac 12}+ 2b_{n,m}\\
&\le & |2\O_T|^{\frac 12}b_{n,m}^{\frac 12}+2b_{n,m},
\end{eqnarray*}
taking into consideration that the sequences $\{f_n\}_n$ and $\{u_{0n}\}_n$ converge strongly in $L^1(\O_T)$ and $L^1(\O)$ respectively, we conclude that $b_{n,m}\to 0$ for $n,m \to \infty$.

Therefore we conclude that $\{u_n\}_n$ is a Cauchy sequence in $C([0,T], L^1(\O))$ and then $u_n\to u$ in $C([0,T], L^1(\O))$.
\end{proof}

We summarize the previous Lemmas as follows:
\begin{itemize}
\item   $u\in C([0,T], L^1(\O))$,
\item $T_k(u)\in L^p(0,T; W^{s,p}_{ 0}(\Omega))$, $u^{p-1}\in
L^\s(\O_T)$ for all $\s<\frac{N(p-1)+ps}{N(p-1)}$ and
\item $T_k(u_n)\rightharpoonup T_k(u)$ weakly in $L^p(0,T; W^{s,p}_{ 0}(\Omega))$.
\end{itemize}
It is clear that $u_n\to u$ a.e. in $\O_T$, then, since $u_n=0$ a.e. in $(\ren\setminus\O) \times (0,T)$, we get $u=0$ a.e. in $(\ren\setminus\O) \times (0,T)$.

Recall that
$$
U_n(x,y,t)=|u_n(x,t)-u_n(y,t)|^{p-2}(u_n(x,t)-u_n(y,t))\mbox{  and  }$$
$$U(x,y,t)=|u(x,t)-u(y,t)|^{p-2}(u(x,t)-u(y,t)).
$$
Since $\O$ is a bounded domain, then by the result of Lemma \ref{lm:l1} and using Vitali's Lemma,  we reach that
$$
U_n\to U \mbox{   strongly in   }L^1((\O\times \O)\times (0,T), d\nu\,dt).
$$

{\bf Proof of Theorem \ref{th1}.}

Let $\phi\in \mathcal{C}^\infty_0(\Omega_T)$ and define
$\Phi(x,y,t)=\phi(x,t)-\phi(y,t)$, taking $\phi$ as a test function in \eqref{pro:lineal1}, it follows that
\begin{equation}\label{TR}
\begin{array}{lll}
&\dyle \int\int_{\O_T}u_{nt} \phi(x,t)\,dx\,dt+
\frac 12\int_0^T\iint_{D_{\O}} \,U_n(x,y,t)\Phi(x,y,t)d\nu\,dt \\
&=\dyle \int\int_{\O_T} f_n(x,t)\phi(x,t) \, dx\, dt.
\end{array}
\end{equation}
It is clear that
$$\iint_{\O_T}u_{nt} \phi(x,t)\,dx\,dt=-\iint_{\O_T}u_{n} \phi_t(x,t)\,dx\,dt.$$
Hence
$$-\iint_{\O_T}u_{n} \phi_t(x,t)\,dx\,dt\to-\iint_{\O_T}u \phi_t(x,t)\,dx\,dt
 \text{ \ as n} \to \infty.$$
In the same way we have
$$
\iint_{\O_T} f_n(x,t)\phi(x,t) \, dx\,dt\to \iint_{\O_T} f(x,t)\phi(x,t)\, dx\,dt \mbox{   as   }n\to \infty.
$$
We claim that
\begin{equation}\label{esti01}
\int_0^T\iint_{D_{\O}}\Big(U_n(x,y,t)-U(x,y,t)\Big)\Phi(x,y,t)d\nu\,dt\to 0\mbox{   as    }n\to \infty.
\end{equation}
Since $u_n\to u$ a.e. in $\Omega_T$, then
$$
\dfrac{U_n(x,y,t)\Phi(x,y,t)}{|x-y|^{N+ps}}\to \dfrac{U(x,y,t)\Phi(x,y,t)}{|x-y|^{N+ps}}\:a.e. \mbox{  in  }D_{\Omega_T}\equiv D_\O\times (0,T).
$$
Using the fact that $u(x,t)=u_n(x,t)=\phi(x,t)=0$ for all $x\in (\ren\backslash\O)\times (0,T)$, we reach that
$$
\int_0^T\int_{\ren\backslash \O}\int_{\ren\backslash \O} (U_n(x,y,t)-U(x,y,t))\Phi(x,y,t)d\nu\, dt=0.
$$
Thus
\begin{eqnarray*}
& \dyle\int_0^T\iint_{D_{\O}}(U_n(x,y,t)-U(x,y,t))\Phi(x,y,t)d\nu\, dt\\
&=\dyle \int_0^T\iint_{\O\times \O}(U_n(x,y,t)-U(x,y,t))\Phi(x,y,t)d\nu\,dt\\
&+\dyle \int_0^T\int_{\ren\backslash \O}\int_{\O}(U_n(x,y,t)-U(x,y,t))\Phi(x,y,t)d\nu\,dt\\
&\dyle +\int_0^T\int_{\O}\int_{\ren\backslash \O} (U_n(x,y,t)-U(x,y,t))\Phi(x,y,t)d\nu\,dt\\
&=I_1+I_2+I_3.
\end{eqnarray*}
Since $U_n\to U \mbox{   strongly in   }L^1((\O\times \O)\times (0,T), d\nu dt)$, then $I_1\to 0\mbox{   as   }n\to \infty.$

We deal now with $I_2$. It is clear that in $(\O\times B_R\backslash \O)\times(0,T)$, we have
$$
|(U_n(x,y,t)-U(x,y,t))\Phi(x,y,t)|\le (|u_n(x,t)|^{p-1}+|u(x,t)|^{p-1})|\phi(x,t)|.
$$
Since
$$
\sup_{\{x\in \text{Supp}\phi, \:\:y\in B_R\backslash \O\}}\dfrac{1}{|x-y|^{N+ps}}\le C,
$$
then
\begin{eqnarray*}
\Big|\dfrac{(U_n(x,y,t)-U(x,y,t))\Phi(x,y,t)}{|x-y|^{N+ps}}\Big| &\le & C(|u_n(x,t)|^{p-1}+|u(x,t)|^{p-1})|\phi(x,t)|\\
&\equiv & Q_n(x,y,t).
\end{eqnarray*}
Notice that $Q_n\to Q$ strongly in $L^1((\O\times B_R\backslash \O)\times(0,T))$ with $$Q(x,y,t)=2|u(x,t)|^{p-1}|\phi(x,t)|.$$ Therefore, using the Dominated convergence Theorem we reach that $I_2\to 0\mbox{   as   }n\to \infty$.
In the same way we obtain that $I_3\to 0\mbox{   as   }n\to \infty$. Hence the claim follows.

As a conclusion and passing to the limit in \eqref{TR} there results that
\begin{eqnarray*}
&-\dyle \iint_{\O_T}u \phi_t(x,t)\,dx\,dt+\frac 12\int_0^T\iint_{{\O_T}} \,U(x,y,t)\Phi(x,y,t)d\nu\,dt\\
&=\dyle\iint_{\O_T} f(x,t)\phi(x,t) \, dx\, dt.
\end{eqnarray*}
Hence we conclude. \cqd

\begin{remark}
The same existence result holds also if $(f,u_0)\in  \mathfrak{M}^1(\O_T)\times \mathfrak{M}^1(\O)$, the set of Radon measures on $\O_T$ and $\O$ respectively.
\end{remark}
\section{Nonnegative solutions obtained as limit of approximation are entropy solutions}\label{s2}
We state now the definition of entropy solution inspired from \cite{Pri}.
\begin{Definition}\label{def1}
Let $(f,u_0)\in L^1(\O_T)\times L^1(\O)$ be nonnegative functions.
We say that $u\in C([0,T];L^1(\O))$ is an entropy solution to the problem
\eqref{eq:def} if $u\in {\mathcal{T} }^{s,p}_0(\O_T)$ and
\begin{enumerate}
\item Setting
\begin{equation}\label{regi}
\begin{array}{lll}
R_h & = & \bigg\{(x,y,t)\in \re^{2N}\times (0,T): h+1\le \max\{|u(x,t)|,|u(y,t)| \}\\
& & \mbox{  with  } \min\{|u(x,t)|,|u(y,t)| \}\le h\mbox{  or  }u(x,t)u(y,t)<0\bigg\}
\end{array}
\end{equation}
then
\begin{equation}\label{entro001}
\iiint_{R_h}|u(x,t)-u(y,t)|^{p-1}d\nu\, dt\to 0\mbox{   as   }h\to \infty.
\end{equation}

\item For all $v\in L^p((0,T); W^{s,p}(\O))\cap
L^{\infty}(\O_T)\cap C([0,T];L^1(\O))$ with $v_t\in
L^{p'}((0,T); W^{-s,p'}(\O))$ we have
\begin{equation}\label{eq:def1}
\begin{array}{lll}
& \dyle\io \Theta _k(u-v)(x,T)dx-\int_0^T\langle v_t,T_k(u-v)\rangle dt\\
&+ \dyle \frac 12\int_0^T\iint_{D_\O}
\,U(x,y,t)[T_k(u(x,t)-\varphi(x,t))-T_k(u(y,t)-\varphi(y,t))]d\nu\,dt \\
& \dyle\le \io \Theta _k(u_0(x)-v(x,0))dx+\iint_{\O_T}f T_k(u-v)dxdt,
\end{array}
\end{equation}
where $\Theta_k(\sigma)=\dyle\int_0^\sigma T_k(a)da.$
\end{enumerate}
\end{Definition}

 We will prove that for nonnegative data $(f,u_0)\in L^1(\O_T)\times L^1(\O)$,  the weak  solution obtained in the previous  Section, is an entropy  solution in the sense of Definition \ref{def1}. Notice that, as a by product, we recover the proof that \textit{any solution as limit of approximations is an entropy solution}, as in the local case.

\begin{Theorem}\label{entropy}
Assume that $(f,u_0)\in L^1(\O_T)\times L^1(\O)$ \, are nonnegative functions, then the weak solution to problem \eqref{eq:def} obtained in Theorem \ref{th1} is an entropy non negative solution in the sense of Definition \ref{def1}.
\end{Theorem}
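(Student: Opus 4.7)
The plan is to establish each of the two requirements of Definition \ref{def1} by returning to the approximating sequence $\{u_n\}_n$ from Section \ref{sec1}. In both cases the mechanism is identical: substitute a carefully chosen test function in \eqref{pro:lineal1}, extract a nonnegative quantity via the algebraic inequalities of Lemma \ref{algg}, and pass to the limit using the a.e.\ convergence $u_n\to u$, the strong $L^1$-convergence $U_n\to U$ in $L^1(D_\Omega\times(0,T),d\nu\,dt)$ established at the end of Section \ref{sec1}, and Fatou's lemma applied to nonnegative pieces, together with the equi-integrability of $\{f_n,u_{0n}\}$ in $L^1$.

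For \eqref{entro001}, I would test \eqref{pro:lineal1} against $T_1(G_h(u_n))$. Since $u_n\ge 0$, the sign-change clause in $R_h$ is vacuous, and only the ``$\min\le h$ and $\max\ge h+1$'' scenario matters. Integration by parts in time produces the expression $\int_\Omega\bigl[\Theta_1(G_h(u_n))(x,T)-\Theta_1(G_h(u_{0n}))\bigr]dx$, whose terminal piece is nonnegative and may be discarded, while the initial piece is dominated by $\int_{\{u_{0n}>h\}}u_{0n}\,dx\to 0$ uniformly in $n$. The data side is bounded by $\iint_{\{u_n>h\}}|f_n|\,dx\,dt\to 0$ uniformly in $n$. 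Inequality \eqref{general00}, applied to the nondecreasing $1$-Lipschitz map $T_1\circ G_h$, renders the nonlocal integrand nonnegative; moreover, on $R_h$ we have $|T_1(G_h(u_n(x,t)))-T_1(G_h(u_n(y,t)))|=1$ and $|u_n(x,t)-u_n(y,t)|\ge 1$, so the integrand there coincides with $|u_n(x,t)-u_n(y,t)|^{p-1}$. Fatou's lemma transfers the uniform bound from $u_n$ to $u$, yielding \eqref{entro001}.

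For the entropy inequality \eqref{eq:def1}, test \eqref{pro:lineal1} against $T_k(u_n-v)$. The time derivative rewrites, via the chain rule for $\Theta_k$, as
$$\int_\Omega\Theta_k(u_n-v)(x,T)\,dx-\int_\Omega\Theta_k(u_{0n}-v(x,0))\,dx+\int_0^T\langle v_t,T_k(u_n-v)\rangle\,dt;$$
Fatou supplies the correct ``$\ge$'' for the terminal term, $u_{0n}\to u_0$ in $L^1$ plus continuity of $\Theta_k$ handles the initial one, and dominated convergence (with $\|T_k\|_\infty\le k$) handles the $v_t$ coupling. The data side $\iint f_n T_k(u_n-v)\,dx\,dt$ converges by dominated convergence. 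The delicate piece is the nonlocal term, for which I would split the increment
$$T_k(u_n(x,t)-v(x,t))-T_k(u_n(y,t)-v(y,t))$$
into $\bigl[T_k(u_n(x,t)-v(x,t))-T_k(u_n(y,t)-v(x,t))\bigr]+\bigl[T_k(u_n(y,t)-v(x,t))-T_k(u_n(y,t)-v(y,t))\bigr]$: the first bracket, paired with $U_n$, gives a nonnegative integrand (monotonicity in the first slot) for which Fatou delivers the desired lower-semicontinuity, while the second bracket is pointwise bounded by $|u_n(x,t)-u_n(y,t)|^{p-1}|v(x,t)-v(y,t)|$ and may be passed to the limit using the strong $L^1$-convergence $U_n\to U$ combined with $v\in L^p((0,T);W^{s,p}(\Omega))\cap L^\infty(\Omega_T)$.

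The main obstacle will be controlling the nonlocal term on the ``crossed'' portions of $D_\Omega$, namely $(\Omega\times(\ren\setminus\Omega))\cup((\ren\setminus\Omega)\times\Omega)\times(0,T)$. There $u=u_n=0$ outside $\Omega$ but the test function $v$ need not vanish, so the decomposition above must be supplemented by an equi-integrability estimate for $|u_n(x,t)-u_n(y,t)|^{p-1}|v(x,t)-v(y,t)|$ away from the diagonal and uniform in $n$. This is precisely where the tail estimate of step (1), applied uniformly to $\{u_n\}_n$, enters: it rules out concentration at $\{u_n=+\infty\}$, permits a cut-off of $u_n$ at a high level $h$, and thereby closes the limit passage.
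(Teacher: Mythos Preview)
Your treatment of \eqref{entro001} is essentially the paper's: test with $T_1(G_h(u_n))$, drop the nonnegative terminal term, use equi-integrability of $f_n,u_{0n}$ and Fatou. Nothing to add there.

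For \eqref{eq:def1} you take a genuinely different route. The paper keeps the test-function increment $T_k(w_n(x))-T_k(w_n(y))$ intact and instead splits the flux as $U_n=W_n+(U_n-W_n)$, where $W_n=|w_n(x)-w_n(y)|^{p-2}(w_n(x)-w_n(y))$; Fatou is applied to the nonnegative term $W_n[T_k(w_n(x))-T_k(w_n(y))]$, and the remainder $K_{2,n}=(U_n-W_n)[T_k(w_n(x))-T_k(w_n(y))]$ is handled by the algebraic inequality $\big||\sigma_1|^{p-2}\sigma_1-|\sigma_2|^{p-2}\sigma_2\big|\le C|\sigma_1-\sigma_2|^{p-1}$ for $p\le 2$ and a more involved level-set decomposition for $p>2$. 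Your splitting of the increment instead of the flux is a legitimate alternative and has the pleasant feature of being $p$-independent; the monotonicity that makes your first bracket nonnegative is clean.

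Two points, however, need repair. First, your ``dominated convergence (with $\|T_k\|_\infty\le k$)'' for the $v_t$ coupling does not work: $v_t$ lives only in $L^{p'}(0,T;W^{-s,p'}(\Omega))$, so the pairing $\int_0^T\langle v_t,T_k(u_n-v)\rangle\,dt$ is a duality, not an integral against an $L^1$ function. The paper's device is essential here: since $|v|\le M$, one has $T_k(u_n-v)=T_k(T_{M+k}(u_n)-v)$, and the uniform $L^p(0,T;W^{s,p}_0)$ bound on $T_{M+k}(u_n)$ then gives $T_k(u_n-v)\rightharpoonup T_k(u-v)$ weakly in $L^p(0,T;W^{s,p}_0)$, which is what the duality needs. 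Second, for your bracket-2 term you invoke the bound $|U_n|\,|v(x)-v(y)|$ together with $v\in L^p(W^{s,p})$; but we do not have $U_n$ bounded in $L^{p'}(D_\Omega,d\nu\,dt)$, so a H\"older pairing is not available. The workable bound is the other one, $|g_n|\le 2k$: then $|U_ng_n-Ug_n|\le 2k|U_n-U|\to 0$ in $L^1((\Omega\times\Omega)\times(0,T),d\nu\,dt)$ and $U(g_n-g)\to 0$ by dominated convergence with majorant $2k|U|$. On the crossed strips $(\Omega\times(\ren\setminus\Omega))\cup((\ren\setminus\Omega)\times\Omega)$ one has $u_n(y)=v(y)=0$ (resp.\ $u_n(x)=v(x)=0$), so $g_n$ is actually independent of $n$ there and the issue reduces to passing $u_n^{p-1}\to u^{p-1}$ against a fixed weight; your appeal to the tail estimate \eqref{entro001} is not the right tool for this---what is needed is again the truncation $T_{M+k}(u_n)$ to place everything in $L^p(W^{s,p}_0)$, exactly as in the $v_t$ step.
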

\pf We have just to show that the weak solution obtained in Theorem \ref{th1} satisfies the conditions \eqref{entro001} and \eqref{eq:def1} stated in Definition \ref{def1}. It is clear that, in this case, the sequence $\{u_n\}_n$ of solution to the approximating problems \eqref{pro:lineal1} is increasing in $n$ and then $u_n\uparrow u$ a.e in $\O_T$.

Let us begin by proving estimate \eqref{entro001}. Since $u, u_n\ge 0$, then the set $R_h$ defined in \eqref{regi} is reduced to
$$
R_h=\bigg\{(x,y,t)\in \re^{2N}\times(0,T): h+1\le \max\{u(x,t),u(y,t)\}\mbox{  with  } \min\{u(x,t),u(y,t)\}\le h\bigg\}. $$
Using $T_1(G_h(u_n))$ as a test function in \eqref{pro:lineal1}, it follows that
\begin{equation*}
\begin{array}{lll}
&\dyle \iint_{\O_T}u_{nt} T_1(G_h(u_n(x,t)))\,dx\,dt\\
&+ \dyle \dfrac 12\int_0^T\dyle \iint_{D_\O}
\,U_n(x,y,t)[T_1(G_h(u_n(x,t)))-T_1(G_h(u_n(y,t)))]d\nu \,dt\\
&=\dyle \iint_{\O_T} f_n(x,t)T_1(G_h(u_n(x,t)))\, dx\,dt\le \iint_{\O_T\cap \{u_n\ge h\}}f_n(x,t)dx\,dt.
\end{array}
\end{equation*}
Notice that
$$
\iint_{\O_T}u_{nt} T_1(G_h(u_n(x,t)))\,dx\,dt=\dyle\io \tilde{\Theta}_h(u_n)(x,T)dx-\dyle\io \tilde{\Theta}_h(u_n)(x,0)dx
$$
where $\tilde{\Theta}_h(\sigma)=\dyle\int_0^\sigma T_1(G_h(a))da.$ It is clear that $\tilde{\Theta}_h(\sigma)\le \sigma$ for all $\sigma\ge 0$.

Taking into consideration that $u_n\ge 0$, it holds
\begin{equation*}
\begin{array}{lll}
&\dfrac 12\dyle \int_0^T \iint_{D_\O}
\,U_n(x,y,t)[T_1(G_h(u_n(x,t)))-T_1(G_h(u_n(y,t)))]d\nu \,dt\\
& \le \dyle\io \tilde{\Theta}_h(u_n)(x,0)dx+\iint_{\O_T\cap \{u_n\ge h\}}f_n(x,t)dx\,dt\\
& \le \dyle\int_{u_0>h}u_0(x)dx+\iint_{\O_T\cap \{u_n\ge h\}}f_n(x,t)dx\,dt.
\end{array}
\end{equation*}
It is not difficult to show that
$$
U_n(x,y,t)[T_1(G_h(u_n(x,t)))-T_1(G_h(u_n(y,t)))]\ge 0.
$$
Thus, using Fatou's lemma, we conclude that
\begin{eqnarray*}
&\dfrac 12\dyle \int_0^T\iint_{D_{\O}}
\,U(x,y,t)[T_1(G_h(u(x,t)))-T_1(G_h(u(y,t)))]d\nu\,dt\le \\
&\dyle \liminf_{n\to \infty} \dfrac 12\dyle \int_0^T\iint_{D_{\O}}
\,U_n(x,y,t)[T_1(G_h(u_n(x,t)))-T_1(G_h(u_n(y,t)))]d\nu\,dt\\
&\le \dyle\int_{u_0\ge h}u_0(x)dx+\iint_{\O_T\cap \{u_n\ge h\}}f_n(x,t)dx\,dt.
\end{eqnarray*}
Since
$$
U_n(x,y,t)[T_1(G_h(u(x,t)))-T_1(G_h(u(y,t)))]\ge |u(x,t)-u(y,t)|^{p-1}\: \mbox{ in } R_h,
$$
then, using the fact that
$$
\dyle\int_{u_0\ge h}u_0(x)dx+\iint_{\O_T\cap \{u_n\ge h\}}f_n(x,t)dx\,dt\to 0\mbox{   as   }h\to \infty,
$$
we conclude that
$$
\iiint_{R_h}
\,|u(x,t)-u(y,t)|^{p-1}d\nu\,dt\to 0\mbox{   as   }h\to \infty
$$
and then \eqref{entro001} holds.

Let now $v\in L^{p}(0,T; W^{s,{p}}_{ 0}(\Omega)) \cap L^{\infty}(\O_T)$ be such that $L^{p'}(0,T; W^{-s,{p'}}_{ 0}(\Omega))$. Taking $T_k(u_n-v)$ as a test function in \eqref{pro:lineal1}, we reach that
\begin{equation}\label{cont}
\begin{array}{lll}
&\dyle \iint_{\O_T}u_{nt} T_k(u_n-v)\,dx\,dt\\
& +\dyle \dfrac 12\dyle \int_0^T\iint_{D_{\O}}
\, U_n(x,y,t)[T_k(u_n(x,t)-v(x,t))-T_k(u_n(y,t)-v(y,t))]d\nu\,dt\\
&=\dyle \iint_{\O_T} f_n(x,t)T_k(u_n(x,t)-v(x,t)) \, dx\,dt.
\end{array}
\end{equation}
Let us study the limit, as $n\to \infty$, of each term of the pervious identity.

By the Dominated Convergence theorem one can easily show that, as $n\to \infty$,
$$
\dyle \iint_{\O_T}f_n(x,t)T_k(u_n(x,t)-v(x,t)) \, dx\,dt\to \dyle \iint_{\O_T} f(x,t)T_k(u(x,t)-v(x,t)) \, dx\,dt.
$$
Since $u_{nt}= (u_n-v)_t+v_t$ one has
\begin{eqnarray*}
&\dyle \iint_{\O_T}u_{nt} T_k(u_n-v)\,dx\,dt=\\
&\dyle \int_{\O}[\Theta_k(u_n-v)](T)\,dx
-\int_{\O}[\Theta_k(u_n-v)](0)dx-\iint_{\O_T}v_tT_k(u_n-v)\,dx\,dt.
\end{eqnarray*}
Using the fact that $u_n\to u$ strongly in $C([0,T], L^1(\O))$ and since $\Theta_k$ is Lipschitz continuous, one has, as $n\to\infty$,
$$ \int_{\O}[\Theta_k(u_n-v)](T)\,dx\to \int_{\O}[\Theta_k(u-v)](T)\,dx
$$
and $$
\int_{\O}[\Theta_k(u_n-v)](0)\,dx\to \int_{\O}[\Theta_k(u_0-v(0))]\,dx.$$
We analyze now the term $\dyle\iint_{\O_T}v_tT_k(u_n-v)\,dxdt$. Since $v\in L^{\infty}(\O_T)$, letting $M=||v||_{\infty}$,  then  $T_k(u_n-v)=T_k(T_{M+k}(u_n)-v)$. Thus $T_k(u_n-v)\rightharpoonup T_k(u-v)$ weakly in $L^p(0,T; W^{s,p}_{ 0}(\Omega)) $. As $v_t\in L^{p'}(0,T; W^{-s,{p'}}_{ 0}(\Omega))$, then a duality argument allows us to conclude that
$$\iint_{\O_T}v_tT_k(u_n-v)\,dx\,dt \to \iint_{\O_T}v_tT_k(u-v)\,dx\,dt.$$
We deal now with the second term in \eqref{cont}. We follow closely the same arguments as in \cite{AAB}, for the reader convenience and to make the paper self contained we include here all details.

We set
$$
w_n=u_n-v\mbox{  and } W_n(x,y,t)=|w_n(x,t)-w_n(y,t)|^{p-2}(w_n(x,t)-w_n(y,t)),$$
then
$$
U_n(x,y,t)[T_k(u_n(x,t)-v(x,t))-T_k(u_n(y,t)-v(y,t))]=:K_{1,n}(x,y,t)+K_{2,n}(x,y,t)
,$$
where
\begin{eqnarray*}
K_{1,n}(x,y,t)=W_n(x,y,t)[T_k(w_n(x,t))-T_k(w_n(y,t))],
\end{eqnarray*}
and
\begin{eqnarray*}
K_{2,n}(x,y,t)=\Big[U_n(x,y,t)-W_n(x,y,t)\Big][T_k(w_n(x,t))-T_k(w_n(y,t))].
\end{eqnarray*}
It is clear that $K_{1,n}(x,y,t)\ge 0$ \ a.e. in $D_{\Omega}\times (0,T)$, since
\begin{eqnarray*}
K_{1,n}(x,y,t)&\to & W(x,y,t)[T_k(w(x,t))-T_k(w(y,t))] \  a.e. \mbox{  in   }D_{\Omega_T},
\end{eqnarray*}
as $n\to \infty$, where
$$
w=u-v\mbox{  and } W(x,y,t)=|w(x,t)-w(y,t)|^{p-2}(w(x,t)-w(y,t)).$$
Hence, using Fatou's Lemma, we obtain that
\begin{eqnarray*}
&\dyle \int_0^T\iint_{D_{\O}}K_{1,n}(x,y,t)d\nu\, dt\ge\\
&\dyle \int_0^T\iint_{D_{\O}}W(x,y,t)[T_k(w(x,t))-T_k(w(y,t))]  d\nu\, dt.
\end{eqnarray*}
We deal now with $K_{2,n}$.

We set
$$
\s_1(x,y,t)=u_n(x,t)-u_n(y,t)\mbox{  and  }\s_2(x,y,t)=w_n(x,t)-w_n(y,t).$$ Then
$$
K_{2,n}(x,y,t)=\Big[|\s_1(x,y,t)|^{p-2}\s_1(x,y,t)-|\s_2(x,y,t)|^{p-2}\s_2(x,y,t)\Big]$$
$$
\times [T_k(w_n(x,t))-T_k(w_n(y,t))].
$$
We claim that, as $n\to \infty$,
\begin{equation}\label{cll}
\begin{array}{lll}
&\dyle \int_0^T\iint_{D_{\O}} K_{2,n}(x,y,t)d\nu\,dt\to \\
& \dyle \int_0^T\iint_{D_{\O}}\Big[U(x,y,t)-W(x,y,t)\Big][T_k(w(x,t))-T_k(w(y,t))]d\nu\,dt
\end{array}
\end{equation}
We divide the proof of the claim into two cases according to the value of $p$.

\

\noindent {\bf \emph{The singular case $p\in (1,2]$:}}
In this case we have
\begin{eqnarray*}
& \Big||\s_1(x,y,t)|^{p-2}\s_1(x,y,t)-|\s_2(x,y,t)|^{p-2}\s_2(x,y,t)\Big| \\
& \le C|\s_1(x,y,t)-\s_2(x,y,t)|^{p-1}=C|v(x,t)-v(y,t)|^{p-1}.
\end{eqnarray*}
Thus
$$
|K_{2,n}(x,y,t)|\le C|v(x,t)-v(y,t)|^{p-1}|T_k(w(x,t))-T_k(w(y,t))|\equiv \tilde{K}_{2,n}(x,y,t).
$$
Using Lemma \ref{lm:l1}, we get that
$$|T_k(w_n(x,t))-T_k(w_n(y,t))|\to |T_k(w(x,t))-T_k(w(y,t))| \mbox{  strongly  in  }L^{p'}(D_{\O_T},d\nu).$$
Since $v\in  L^{p}(0,T; W^{s,{p}}_{ 0}(\Omega)) \cap L^{\infty}(\O_T)$, by duality argument we conclude that
$$
\tilde{K}_{2,n}\to C|v(x,t)-v(y,t)|^{p-1}|T_k(w(x,t))-T_k(w(y,t))|\mbox{  strongly  in  }L^1(D_{\O_T},d\nu).
$$
Using the Dominated Convergence theorem we reach that
\begin{eqnarray*}
&\dyle \int_0^T\iint_{D_{\O}}K_{2,n}(x,y,t)d\nu\,dt\to \\
& \dyle \int_0^T\iint_{D_{\O}}\Big[U(x,y,t)-W(x,y,t)\Big][T_k(w(x,t))-T_k(w(y,t))]d\nu\,dt,
\end{eqnarray*}
as $n\to \infty$ and the claim follows in this case.

\

\noindent {\bf{\emph{The degenerate case $p>2$:}}} In this case we have
\begin{eqnarray*}
& \Big||\s_1(x,y,t)|^{p-2}\s_1(x,y,t)-|\s_2(x,y,t)|^{p-2}\s_2(x,y,t)\Big|\\
&\le C_1|\s_1(x,y,t)-\s_2(x,y,t)|^{p-1}+C_2|\s_2(x,y,t)|^{p-2}|\s_1(x,y,t)-\s_2(x,y,t)|\\
&\le C_1|v(x,t)-v(y,t)|^{p-1} +C_2|v(x)-v(y)||w_n(x,t)-w_n(y,t)|^{p-2}\\
&\le C_1|v(x,t)-v(y,t)|^{p-1} +C_2|v(x,t)-v(y,t)||u_n(x,t)-u_n(y,t)|^{p-2}.
\end{eqnarray*}
Thus
\begin{eqnarray*}
|K_{2,n}(x,y,t)|& \le & C_1|v(x,t)-v(y,t)|^{p-1}|T_k(w_n(x,t))-T_k(w_n(y,t))|\\
&+ & C_2|v(x,t)-v(y,t)||u_n(x,t)-u_n(y,t)|^{p-2}|T_k(w_n(x,t))-T_k(w_n(y,t))|\\
&\equiv & \bar{K}_{2,n}(x,y,t)+\check{K}_{2,n}(x,y,t).
\end{eqnarray*}
The term $\bar{K}_{2,n}(x,y,t)$ can be treated as $\tilde{K}_{2,n}$ above. Hence it remains to deal with $\check{K}_{2,n}(x,y,t)$.

We define
$$D_1=\{(x,y,t)\in D_{\O_T}: u_n(x,t)\le \tilde{k}, u_n(y,t) \le \tilde{k} \},$$ where $\tilde{k}>>k+||v||_\infty$ is a large constant. Using duality argument we obtain
\begin{eqnarray*}
&\check{K}_{2,n}(x,y,t)\chi_{D_1}\to \\
& C_2|v(x,t)-v(y,t)||u(x,t)-u(y,t)|^{p-2}|T_k(w(x,t))-T_k(w(y,t))|\chi_{\{u(x,t)\le \tilde{k}, u(y,t)\le \tilde{k}\}}
\end{eqnarray*}
strongly  in  $L^1(D_{\O_T},d\nu)$.

\

Now, consider the set
$$D_2=\{(x,y,t)\in D_{\O_T}: u_n(x,t)\ge k_1, u_n(y,t) \ge k_1 \},$$ where $k_1>k+||v||_\infty$, then $\check{K}_{2,n}(x,y,t)\chi_{D_2}(x,y,t)=0$.

It is clear that, taking into consideration the previous computations, that we have just to analyze the convergence on the set
$$D_3=\{(x,y,t)\in D_{\O_T}: u_n(x,t)\ge 2k, u_n(y,t) \le k \},$$ or
$$D_4=\{(x,y,t)\in D_{\O_T}: u_n(y,t)\ge 2k, u_n(x,t) \le k \}.$$

If $(x,y,t)\in D_3$, then
\begin{eqnarray*}
& \check{K}_{2,n}(x,y,t)\chi_{D_3}(x,y,t) \\
& \le C(k)\bigg|v(x,t)-v(y,t)\bigg|\bigg|T_k(w_n(x,t))-T_k(w_n(y,t))\bigg|u^{p-2}_n(x,t)\chi_{D_3}(x,y,t).
\end{eqnarray*}
Notice that
$$
u^{p-2}_n(x,t)\chi_{D_3}(x,y,t)\rightharpoonup u^{p-2}(x,t)\chi_{\{u(x,t)\ge 2k, u(y,t)\le k\}} \mbox{  weakly  in   }L^{\frac{p-1}{p-2}}(D_{\O_T}, d\nu).
$$
Since
\begin{eqnarray*}
&\Big[|v(x,t)-v(y,t)||T_k(w_n(x,t))-T_k(w_n(y,t))|\Big]^{p-1}\chi_{\{u(x,t)\ge 2k, u(y,t)\le k\}} \\
&\le k^{p-2}|v(x,t)-v(y,t)|^{p-1}\bigg|T_k(w_n(x,t))-T_k(w_n(y,t))\bigg|\chi_{\{u(x,t)\ge 2k, u(y,t)\le k\}}
\end{eqnarray*}
The duality argument allows us to conclude that
\begin{eqnarray*}
&\Big[|v(x,t)-v(y,t)||T_k(w_n(x,t))-T_k(w_n(y,t))|\Big]^{p-1}\chi_{\{u(x,t)\ge 2k, u(y,t)\le k\}} \to \\ &\Big[|v(x,t)-v(y,t)||T_k(w(x,t))-T_k(w(y,t))|\Big]^{p-1}\chi_{\{u(x,t)\ge 2k, u(y,t)\le k\}}
\end{eqnarray*}
strongly   in   $L^1(D_\O, d\nu)$ as $n\to \infty$.

Thus
\begin{eqnarray*}
& \check{K}_{2,n}\chi_{D_3}\to \\
& C_2|v(x,t)-v(y,t)||u(x,t)-u(y,t)|^{p-2}|T_k(w(x,t))-T_k(w(y,t))|\chi_{\{u(x,t)\ge 2k, u(y,t)\le k\}}
\end{eqnarray*}
strongly  in $L^1(D_{\O_T},d\nu)$.

\

In the same way we can treat the set $D_4$.

Therefore, combining the above estimates and using the Dominate Convergence theorem, we conclude that
\begin{eqnarray*}
&\dyle \int_0^T\iint_{D_{\O}}K_{2,n}(x,y,t)d\nu\,dt\to \\
& \dyle \int_0^T\iint_{D_{\O}}\Big[U(x,y,t)-W(x,y,t)\Big][T_k(w(x,t))-T_k(w(y,t))]d\nu\,dt
\end{eqnarray*}
as $n\to \infty$ and the claim follows in this case.

Therefore, as a conclusion we have proved that
\begin{equation*}
\begin{array}{lll}
&\dyle \int_{\O}[\Theta_k(u-v)](T)\,dx
-\int_{\O}[\Theta_k(u-v)](0)dx-\iint_{\O_T}v_tT_k(u-v)\,dx\,dt\\
&+ \dfrac 12\dyle \int_0^T\iint_{D_{\O}}
\, U(x,y,t)[T_k(u(x,t)-v(x,t))-T_k(u(y,t)-v(y,t))]d\nu\,dt\\
&\le\dyle \iint_{\O_T} f(x,t)T_k(u(x,t)-v(x,t)) \, dx\,dt.
\end{array}
\end{equation*}
and the result follows at once. \cqd

\section{Further results.}
\subsection{Extinction in the finite time}

In this subsection we suppose that $f\equiv 0$ and $p<2$, our main goal is to get natural condition in order to show that the nonnegative solution is zero for large time. The first result in this direction is the following.

\begin{Theorem}\label{exis2}
Assume that $\frac{2N}{N+2s}\le p<2$ and $u_0\in L^2(\O)$. Let $u$ be the unique nonnegative solution to the problem .
\begin{equation}\label{propa}
\left\{
\begin{array}{rcll}
u_t+(-\D^s_{p}) u &= & 0  &
\text{ in } \O_{T}, \\
u & \ge &  0 &
\text{ in }\Omega, \\
u &= & 0  & \text{ in }\ren\setminus\O \times (0,T), \\
u(x,0) & = & u_0(x) & \mbox{  in } \O,
\end{array}%
\right.
\end{equation}
then there exists a finite time $T^*(N,p,|\O|,||u_0||_2)\equiv T^*\le \frac{2}{(2-p)S}||u_0||^{2-p}_2 |\O|^{\frac{p}{2}-\frac{p}{p^*_s}}$ such that
$u(.,t)\equiv 0$ for $t\ge T^*$.
\end{Theorem}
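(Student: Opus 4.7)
The plan is to derive a sharp differential inequality for $y(t):=\|u(\cdot,t)\|_{L^2(\O)}^{2}$ that forces extinction because $p<2$ makes the right-hand side sublinear. Since $u_0\in L^2(\O)$ and $f\equiv 0$, problem \eqref{propa} falls into the energy framework of Definition \ref{energy}, so $u\in L^p(0,T;W^{s,p}_0(\O))\cap \mathcal{C}([0,T];L^2(\O))$ with $u_t\in L^{p'}(0,T;W^{-s,p'}(\O))$. First I would test \eqref{propa} with $u$ itself to produce the energy identity
$$
\frac{1}{2}\frac{d}{dt}\|u(\cdot,t)\|_{L^2(\O)}^{2}+\frac{1}{2}\iint_{D_\O}\frac{|u(x,t)-u(y,t)|^p}{|x-y|^{N+ps}}\,dxdy=0,
$$
carried out rigorously on the approximations $\{u_n\}$ of Section \ref{sec1} and then passed to the limit.

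Since $u(\cdot,t)\equiv 0$ outside $\O$, the integral over $D_\O$ coincides with the full integral over $\re^N\times\re^N$, so the fractional Sobolev inequality of Theorem \ref{Sobolev} bounds the Gagliardo seminorm from below by $S\,\|u(\cdot,t)\|_{L^{p_s^{*}}(\O)}^{p}$. The hypothesis $p\ge\frac{2N}{N+2s}$ is precisely the condition $p_s^{*}\ge 2$; hence H\"older's inequality on the bounded set $\O$ gives $\|u\|_{L^2(\O)}\le |\O|^{\frac{1}{2}-\frac{1}{p_s^{*}}}\|u\|_{L^{p_s^{*}}(\O)}$. Combining these ingredients with the energy identity yields the key differential inequality
$$
y'(t)+S\,|\O|^{-(\frac{p}{2}-\frac{p}{p_s^{*}})}\,y(t)^{p/2}\le 0.
$$

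Because $p<2$, this is a Bernoulli-type inequality with a sublinear nonlinearity. Dividing by $y^{p/2}$ (on the set where $y>0$) and integrating on $(0,t)$ yields
$$
y(t)^{1-p/2}\le \|u_0\|_{L^2(\O)}^{2-p}-\Big(1-\tfrac{p}{2}\Big)S\,|\O|^{-(\frac{p}{2}-\frac{p}{p_s^{*}})}\,t,
$$
and the right-hand side vanishes precisely at $T^{*}=\tfrac{2}{(2-p)S}\|u_0\|_{L^2(\O)}^{2-p}|\O|^{\frac{p}{2}-\frac{p}{p_s^{*}}}$. Since $y\ge 0$ and is absolutely continuous, this forces $y(t)\equiv 0$ for all $t\ge T^{*}$, which is the announced bound.

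The only real technical obstacle is justifying that $u$ is a legitimate test function so that $\tfrac12\tfrac{d}{dt}\|u\|_{L^2}^{2}=\langle u_t,u\rangle$ makes sense as a pointwise-in-time identity. The cleanest route is to perform the computation at the approximation level, where $u_n$ has the regularity needed for the chain rule, and then pass to the limit using $u_n\to u$ in $\mathcal{C}([0,T];L^1(\O))$ from Lemma \ref{tt} together with the weak convergence of $u_n$ in $L^p(0,T;W^{s,p}_0(\O))$ and the lower semicontinuity of the convex Gagliardo functional; the resulting inequality is preserved in the limit since we need only an upper bound on $y'$.
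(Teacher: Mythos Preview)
Your proof is correct and follows essentially the same route as the paper: test the equation with $u$ to obtain the energy identity, apply the Sobolev inequality and then H\"older (using $p_s^{*}\ge 2$) to reach the sublinear differential inequality $y'+C\,y^{p/2}\le 0$, and integrate to extinction at the stated $T^{*}$. You additionally justify the use of $u$ as a test function via the approximation scheme, a point the paper leaves implicit.
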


\begin{proof}
We follow closely the arguments used in \cite{AP}. For the reader convenience we include here some details. Using $u$ as a test function in \eqref{propa}, we get
\begin{equation*}
\frac{1}{2}\frac{d}{dt}\int_{\Omega }u^{2}dx+ \frac{1}{2}\iint_{D_{\O}}\dfrac{|u(x,t)-u(y,t)|^{p}}{|x-y|^{N+ps}}dx \ dy= 0.
\end{equation*}
By the Sobolev inequality, we reach that
\begin{equation*}
\frac{1}{2}\frac{d}{dt}\int_{\Omega }u^{2}dx+\frac{S}{2} \left( \int_{\Omega }\left\vert u\right\vert ^{p^{\ast }_s}dx\right) ^{\frac{p}{p^\ast_s }}\leq 0.
\end{equation*}
Suppose that $\frac{2N}{N+2s}<p<2$, then $p^*_s>2$, thus by H\"older inequality, we obtain
$$\int_{\O}u^2(x,t)dx\le |\O|^{1-\frac{2}{p^*_s}}\big(\int_{\O}\mid u^{p^*_s}(x,t)\ dx\big)^{\frac{2}{p^*_s}}.$$
Thus
$$\frac{1}{2}\frac{d}{dt}\parallel u(x,t)\parallel^2_2+\frac{S}{2} |\O|^{\frac{p}{p^*_s}-\frac{p}{2}}\parallel u(x,t)\parallel^p_2\le 0
$$
and then
$$\parallel u(x,T)\parallel_2\le \parallel u_0\parallel_2\Big(1- \frac{(2-p)\frac{S}{2}|\O|^{\frac{p}{p^*_s}-\frac{p}{2}}T}{\parallel u_0\parallel^{2-p}_2}\Big)^{\frac{1}{2-p}}.
$$
Hence if $T\ge T^*\equiv \frac{2}{(2-p)S}||u_0||^{2-p}_2 |\O|^{\frac{p}{2}-\frac{p}{p^*}}$, then $u(x,T)=0$ and the result follows.
\end{proof}

In the case where $1<p<\frac{2N}{N+2s}$, under suitable hypothesis on $u_0$, we can prove the finite time extinction property. More precisely we have.
\begin{Theorem}
Suppose that $1<p<\frac{2N}{N+2s}$ and $u_0 \in L^{\nu+1}(\O)\cap  L^{2}(\O)$ with $\nu+1=\frac{N(2-p)}{ps}$, then there exists $T^*$ such that $u( .,t) \equiv 0$ for all $t\geq T^*$.
\end{Theorem}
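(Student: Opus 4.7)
My plan is to derive a differential inequality $y'(t) + C y(t)^{\alpha} \le 0$ with $\alpha < 1$ for $y(t) := \int_\Omega u^{\nu+1}(x,t)\,dx$, which forces $y \equiv 0$ past a computable time $T^*$. The exponent $\nu+1 = N(2-p)/(ps)$ is chosen precisely so that, after testing with $u^\nu$, the algebraic inequality \eqref{alge3} and the fractional Sobolev inequality of Theorem~\ref{Sobolev} combine to produce matching powers of $\int u^{\nu+1}$ on both sides of the energy identity.

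I would first work with the bounded approximations $u_n \ge 0$ constructed in Section~\ref{sec1} with $f_n \equiv 0$ and $u_{0n} := T_n(u_0)$. The hypothesis $p < 2N/(N+2s)$ is equivalent to $p^*_s < 2$, and together with a direct computation it yields $\nu+1 > 2$, hence $\nu > 1$; since each $u_n$ is nonnegative and bounded, $u_n^\nu \in L^p(0,T; W^{s,p}_0(\Omega)) \cap L^\infty(\Omega_T)$ is an admissible test function. Testing against $u_n^\nu \chi_{[0,\tau]}$ and using the chain rule on the parabolic term gives
$$\frac{1}{\nu+1}\int_\Omega u_n^{\nu+1}(\tau)\,dx + \frac{1}{2}\int_0^\tau\!\!\iint_{D_\Omega} U_n(x,y,t)\bigl(u_n^\nu(x,t) - u_n^\nu(y,t)\bigr)\,d\nu\,dt \le \frac{1}{\nu+1}\int_\Omega u_{0n}^{\nu+1}\,dx.$$
Inequality \eqref{alge3} with exponent $\nu$ bounds the integrand below by $c_3\,|u_n^{(p+\nu-1)/p}(x,t) - u_n^{(p+\nu-1)/p}(y,t)|^p |x-y|^{-(N+ps)}$, after which Theorem~\ref{Sobolev} applied to $u_n^{(p+\nu-1)/p}$ (extended by zero outside $\Omega$, legitimate since $u_n$ vanishes there) yields a lower bound proportional to $(\int_\Omega u_n^{(p+\nu-1) p^*_s / p}\,dx)^{p/p^*_s}$. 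The algebraic identity $\frac{(p+\nu-1)p^*_s}{p} = \nu+1$ is a direct consequence of the choice of $\nu$, so with $y_n(t) := \int_\Omega u_n^{\nu+1}(x,t)\,dx$ one arrives at $y_n(\tau) + C_0 \int_0^\tau y_n(t)^{\alpha}\,dt \le y_n(0)$, equivalently the ODE inequality $y_n'(t) + C_0 y_n(t)^{\alpha} \le 0$ with $\alpha := p/p^*_s = (N-ps)/N < 1$.

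Integrating the Bernoulli-type inequality gives $y_n(t)^{1-\alpha} \le (y_n(0)^{1-\alpha} - C_0(1-\alpha)t)_+$, so $y_n(t) = 0$ for $t \ge T_n^* := y_n(0)^{1-\alpha}/(C_0(1-\alpha))$; since $y_n(0) \le \|u_0\|_{\nu+1}^{\nu+1} < \infty$ by assumption, there is a uniform bound $T^* = T^*(N,p,s,\|u_0\|_{\nu+1})$, and the a.e.\ convergence $u_n \to u$ from Section~\ref{sec1} then yields $u(\cdot,t) \equiv 0$ for $t \ge T^*$. The main technical point is justifying $u_n^\nu$ as a test function and handling the parabolic term rigorously: a safe route is to test with $((u_n+\eps)^\nu - \eps^\nu)\chi_{[0,\tau]}$ on the bounded-data approximation where classical energy theory applies, integrate by parts in time through the primitive $\sigma \mapsto \int_0^{\sigma} ((s+\eps)^\nu - \eps^\nu)\,ds$, and then send $\eps \to 0^+$ by monotone convergence; the Sobolev step is transparent once $u_n^{(p+\nu-1)/p}$ is recognized as an element of $W^{s,p}_0(\Omega) \hookrightarrow W^{s,p}(\mathbb{R}^N)$ via zero extension.
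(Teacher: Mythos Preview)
Your proposal is correct and follows essentially the same route as the paper: test with $u^{\nu}$ (you work at the level of the approximations $u_n$ and include the $(u_n+\eps)^\nu-\eps^\nu$ regularisation, which is more careful but not different in spirit), apply inequality \eqref{alge3} and then the Sobolev inequality of Theorem~\ref{Sobolev}, use the key identity $\frac{(\nu+p-1)}{p}p^*_s=\nu+1$ coming from the choice of $\nu$, and integrate the resulting Bernoulli inequality $y'+C_0 y^{p/p^*_s}\le 0$ to obtain finite-time extinction. The paper argues directly on $u$ rather than on $u_n$, but the computations and the extinction mechanism are identical.
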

\begin{proof}
We use $u^{\nu}$ as test function in \eqref{propa}, then
\begin{eqnarray*}
& \dyle \frac{1}{\nu+1}\frac{d}{dt}\int_{\Omega }u^{\nu+1}dx\\
&\dyle +\frac{1}{2}\iint_{D_{\O}}\dfrac{|u(x,t)-u(y,t)|^{p-2}(u(x,t)-u(y,t))}{|x-y|^{N+ps}}(u^\nu(x,t)-u^\nu(y,t))dx \ dy
=0.
\end{eqnarray*}
Hence, by inequality\eqref{alge3}, we get
\begin{equation*}
\frac{1}{\nu+1}\frac{d}{dt}\int_{\Omega }u^{\nu+1}dx+\frac{C}{2}\iint_{D_{\O}}\dfrac{|u^{\frac{p+\nu-1}{p}}(x,t)-u^{\frac{p+\nu-1}{p}}(y,t)|^{p}}{|x-y|^{N+ps}}dy \ dx\le 0
\end{equation*}%
Using now Sobolev inequality there results
\begin{equation*}
\frac{1}{\nu+1}\frac{d}{dt}\int_{\Omega }u^{\nu+1}dx+C\left( \int_{\Omega }u^{\frac{\left( \nu+p-1\right) }{p}p^{\ast }_s}dx\right) ^{\frac{p}{p^{\ast }_s}}\leq 0.
\end{equation*}
Recall that $\nu=\frac{N(2-p)-ps}{ps}$, then  $\frac{\nu+p-1}{p}p^{\ast }_s=\nu+1 $.\\

$$\frac{1}{\nu+1}\frac{d}{dt} ||u(x,t)||^{\nu+1}_{\nu+1}+S||u(x,t)||^{\nu+p-1}_{\nu+1} \le 0$$
Now,  we get that
$$
||u(x,T)||_{\nu+1}\le ||u_0||_{\nu+1}\Big(1-\frac{c(2-p)T}{\parallel u_0\parallel^{2-p}_{\nu+1}}\Big)^{\frac{1}{2-p}}.
$$
Hence the result follows.
\end{proof}
\subsection{Non Finite speed of propagation} It is wellknown that for the local $p$-laplacian parabolic problem with $p>2$, there is a phenomenon of \textit{finite speed of propagation}. In fact,  the fundamental solution obtaided by G. Barenblatt  allows to prove  finite speed of propagation by using comparison arguments.

The meaning of finite speed of propagation in the local case can be summarized as follows:

\textit{Assume that we have an inial data such that
 $\text{supp}(u_0)$ is a compact set, then $\text{supp}(u(,t))$ is a compact set of $\O$ for $t<t_1$. }

We can rewrite the previous notion by saying that:

\textit{ Given an initial data with finite support, $u_0$, for all $t>0$, there exists $R>0$ such that $u(x,t)=0$ if $|x|>R$.}

Let us consider the nonlocal problem \eqref{propa} with $\Omega\equiv \ren$ and a bounded nonnegative data $u_0$ with compact support.

If we assume that the finite speed of propagation holds, we get a contradiction with the fact that $u\in \mathcal{C}([0, T), L^1(\mathbb{R}^N))$.

Indeed, suppose that for $t_0>0$, there exists $x_0\in \mathbb{R}^N$ such that the solution verifies that $u(x_0,t_0)=0$. Then $(x_0,t_0)$ is a  global minimum, hence
$$0=\int_{\mathbb{R}^N} \frac{|u(y,t_0)|^{p-2}u(y,t_0)}{|x-y|^{N+ps}}dy.$$
Since $u(x,t)\ge 0$, we find that  $u(x,t_0)= 0$ for all $x\in\mathbb{R}^N$.

Recall that $u\in \mathcal{C}([0, T), L^1(\mathbb{R}^N))$, thus by continuity for $t$ small, we have
$ \dint_{\mathbb{R}^N} u(x, t)dx>0$ and then we reach a contradiction.

Notice that if $p>2$,  as in the local case,  by  a scaling arguments the equation can be reduced to the self-similar variable and the corresponding Barenblatt type solution can be obtained.  Following the radial computations  in \cite{FV} and \cite{G}, we get that a self-similar solution $u(x,t)=t^{-N\beta}\Upsilon(\frac{r}{t^\beta})$
with $r=|x|, \beta=\frac{1}{ps+N(p-2)}$, must to solve the following equation
\begin{equation*}\label{hh}
\b \big[ N\Upsilon(r)+r\Upsilon'(r)\big]=\frac{1}{r^{ps}}\int_0^\infty\,|\Upsilon(r)-\Upsilon(\sigma r)|^{p-2}(\Upsilon(r)-\Upsilon(\sigma r))\sigma^{N-1}K_s(\sigma)d\sigma
\end{equation*}
where
$$
K_s(\sigma)=\dint\limits_{|y'|=1}\dfrac{dH^{n-1}(y')}{|x'-\s
y'|^{N+ps}}=2\frac{\pi^{\frac{N-1}{2}}}{\beta(\frac{N-1}{2})}\int_0^\pi
\frac{\sin^{N-2}(\xi)}{(1-2\sigma \cos
(\xi)+\sigma^2)^{\frac{N+ps}{2}}}d\xi.
$$
Since $\Upsilon\gneqq 0$, then $\Upsilon(\s)> 0$ for all $\s>0$. We refer to \cite{VAZ} where additional properties of the previous profile and the asymptotic behavior are studied.
\subsection{Extinction for Concave case.}
Let consider now the problem
\begin{equation}\label{concave}
\left\{
\begin{array}{rcll}
u_t+(-\D^s_{p}) u & = &  u^q  &
\text{ in } \O_{T},\\
u & \ge &  0 &
\text{ in }\Omega, \\
u &=& 0 & \text{ in }\ren\setminus\O \times (0,T), \\
u(x,0) & = & u_0(x) & \mbox{  in  }\O,
\end{array}%
\right.
\end{equation}
where $q\le 1$, the result obtained is similar as in the \cite{AMTP} and \cite{MMP} in the local case. For the reader convenience we include the calculations in the fractional case.
\begin{Theorem}\label{concave-extinction}
Let  $1<p<2$ and $u_0\in L^2(\O)$, then for $u_0\in L^2(\O)$ and for all $p-1<q\le 1$ the problem \eqref{concave} has a nonnegative  minimal solution $u\in L^p(0,T; W^{s,p}_{ 0}(\Omega))$, moreover if $p>\frac{2N}{N+2s}$; then under a smallness condition
  on $||u_0||_2$, there exists a finite time  $T^*$ such that
$u(.,t)\equiv 0$ for all $t\ge T^*$.
\end{Theorem}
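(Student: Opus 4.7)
The plan is to split the two assertions of the theorem.

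For the existence of a minimal nonnegative solution, I would proceed by monotone iteration. Set $v_0\equiv 0$ and, for $n\ge 0$, let $v_{n+1}$ be the nonnegative energy solution of
\begin{equation*}
(v_{n+1})_t+(-\D^s_{p})v_{n+1}=v_n^q \text{ in }\O_T,\quad v_{n+1}(\cdot,0)=u_0,\quad v_{n+1}=0 \text{ in }(\ren\setminus\O)\times(0,T),
\end{equation*}
whose existence follows from the classical theory in \cite{Lio} once $v_n^q$ is identified in a suitable dual space. Since $t\mapsto t^q$ is nondecreasing on $[0,\infty)$, the monotonicity of the fractional $p$-Laplacian together with testing the difference equation against $(v_n-v_{n+1})^+$ yields $v_n\le v_{n+1}$ a.e. Testing with $v_{n+1}$ produces
\begin{equation*}
\tfrac12\tfrac{d}{dt}\|v_{n+1}\|_2^2+\tfrac12\iint_{D_\O}\frac{|v_{n+1}(x,t)-v_{n+1}(y,t)|^p}{|x-y|^{N+ps}}\,dx\,dy=\int_\O v_n^q\,v_{n+1}\,dx,
\end{equation*}
and Young's inequality combined with the inclusion $L^2(\O)\hookrightarrow L^{q+1}(\O)$ (valid since $q+1\le 2$ and $\O$ is bounded), plus an absorption argument exploiting $q+1\le 2$, gives uniform bounds on $\{v_n\}$ in $L^\infty(0,T;L^2(\O))\cap L^p(0,T;W^{s,p}_0(\O))$. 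Monotone convergence identifies a limit $u$, and the passage to the limit in the fractional $p$-Laplacian term follows the a.e.\ convergence plus Vitali-type compactness used in the proof of Theorem~\ref{th1}. Minimality is automatic from the scheme, as any nonnegative supersolution dominates every $v_n$.

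For the finite-time extinction, set $y(t):=\|u(\cdot,t)\|_2^2$ and test \eqref{concave} with $u$ to obtain
\begin{equation*}
\tfrac12 y'(t)+\tfrac12\iint_{D_\O}\frac{|u(x,t)-u(y,t)|^p}{|x-y|^{N+ps}}\,dx\,dy=\int_\O u^{q+1}\,dx.
\end{equation*}
The assumption $p>\tfrac{2N}{N+2s}$ gives $p_s^*>2$, so Theorem~\ref{Sobolev} followed by H\"older yields
\begin{equation*}
\iint_{D_\O}\frac{|u(x,t)-u(y,t)|^p}{|x-y|^{N+ps}}\,dx\,dy\ge S\,|\O|^{p/p_s^*-p/2}\,y^{p/2},
\end{equation*}
while H\"older (using $q+1\le 2$) gives $\int_\O u^{q+1}\,dx\le |\O|^{(1-q)/2}\,y^{(q+1)/2}$. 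Combining,
\begin{equation*}
y'(t)\le -\Big[S|\O|^{p/p_s^*-p/2}-2|\O|^{(1-q)/2}\,y^{(q+1-p)/2}\Big]y^{p/2}.
\end{equation*}
Since $q>p-1$, the exponent $(q+1-p)/2$ is strictly positive, so the bracket is strictly positive for $y$ below an explicit threshold $y^*$ depending only on $S,\,|\O|,\,p,\,q,\,s,\,N$. The smallness condition I would impose is $\|u_0\|_2^2<y^*$; continuity of $y$ (coming from $u\in L^\infty(0,T;L^2(\O))\cap C([0,T],L^1(\O))$ and the energy identity) and a bootstrap then ensure $y(t)<y^*$ for all $t\ge 0$, whence $y'\le -c\,y^{p/2}$ for a fixed $c>0$. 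Since $p/2<1$, elementary integration delivers the finite extinction time $T^*\le \tfrac{2}{c(2-p)}\|u_0\|_2^{2-p}$.

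The main obstacle lies in the existence step: in the singular range $1<p<2$, identifying the limit in the fractional $p$-Laplacian term along the monotone sequence requires the Vitali-type convergence of Theorem~\ref{th1}, which is delicate because $\xi\mapsto|\xi|^{p-2}\xi$ is only H\"older continuous near the origin. Once the sharp ODI above is established, the extinction part is a routine concave-perturbation analysis transposed to the nonlocal setting.
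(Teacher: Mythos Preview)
Your proposal is correct, and the existence step is close in spirit to the paper's (the paper also works with an approximating sequence $u_n$ and the same energy test), but your extinction argument takes a genuinely cleaner route than the paper's.

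The paper separates the two sub-cases. For $q=1$ it absorbs the reaction into the time derivative via the substitution $F(t)=e^{-2t}\|u\|_2^2$ and then integrates the resulting differential inequality. For $q<1$ it introduces an auxiliary exponent $0<\nu<1$, tests with $(u_n+\varepsilon)^\nu-\varepsilon^\nu$, invokes the algebraic inequality \eqref{alge3}, and then uses Young's inequality to split $\int u^{q+\nu}$ between $\int u^{\nu+1}$ and a Sobolev term before applying the same exponential-weight trick to $F(t)=e^{-C_2 t}\|u\|_{\nu+1}^{\nu+1}$.

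Your argument bypasses both the case distinction and the auxiliary exponent: working throughout with $y(t)=\|u(\cdot,t)\|_2^2$, you compare the two powers $y^{p/2}$ and $y^{(q+1)/2}$ directly, observe that the hypothesis $q>p-1$ makes the Sobolev term dominate for small $y$, and run a continuity/bootstrap argument to reach the clean inequality $y'\le -c\,y^{p/2}$. This is shorter and makes the role of the constraint $q>p-1$ transparent. What the paper's $\nu$-based computation buys is a template that adapts to the next theorem (the range $1<p<\frac{2N}{N+2s}$, where one is forced to work in $L^{\nu+1}$ with $\nu+1=\frac{N(2-p)}{ps}$), but within the present statement your approach is the more economical one.
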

\begin{proof}
We begin by the case $q=1$. Let $u_n$ be the minimal solution of the approximated problem
\begin{equation}\label{pro:concave}
\left\{\begin{array}{rcll}
u_{nt}+(-\D^s_{p}) u_n &= & u^q_n & \mbox{  in   }\O_T,\\
u_n & = & 0 & \hbox{  in } \ren\backslash\O\times (0,T),\\
u_n(x,0) & = & u_{0n}(x) & \mbox{  in   }\O,
\end{array}
\right.
\end{equation}
taking $u_n$ as a test function in \eqref{pro:concave}, we obtain
\begin{equation}\label{pro:concave 1}
\frac{1}{2}\frac{d}{dt}\int_{\Omega }u^{2}_ndx- \int_{\Omega }u^{2}_ndx+ \frac{1}{2}\iint_{D_{\O}}\dfrac{|u_n(x,t)-u_n(y,t)|^{p}}{|x-y|^{N+ps}}dxdy= 0.\end{equation}
Thus by Gronwall inequality we conclude
$$ \int_{\Omega }u^{2}_n(x,T)dx+ \frac{1}{2}\int_0^T\iint_{D_{\O}}\dfrac{|u_n(x,t)-u_n(y,t)|^{p}}{|x-y|^{N+ps}}dxdy\le ||u_0||^2_{2}e^{2 T}.$$
Therefore we reach that $\{u_n\}_n$ is bounded in $ L^p(0; T;W^{s,p}_{ 0}(\Omega))\cap L^2(\O_T)$. Thus $u_n\uparrow u$ with $u\in L^p(0,T; W^{s,p}_{ 0}(\Omega))$  and $u$ is the minimal solution to problem \eqref{concave}.
\newline
Let us assume now that $\frac{2N}{N+2s}<p<2$, using Sobolev inequality in \eqref{pro:concave 1} there result that
\begin{equation*}
\frac{1}{2}\frac{d}{dt}\int_{\Omega }u^{2}_ndx- \int_{\Omega }u^{2}_n dx+\frac{S}{2} \left( \int_{\Omega }\left\vert u_n\right\vert ^{p^{\ast }_s}dx\right) ^{\frac{p}{p^\ast_s }}\leq 0.
\end{equation*}
Thus
\begin{equation*}
\frac{d}{dt}\Big(e^{-2t}\int_{\Omega }u^{2}_ndx\Big)+S e^{-(2-p)t} \left( \int_{\Omega }\left(e^{-2t}\vert u_n\right\vert) ^{p^{\ast }_s}dx\right)^{\frac{p}{p^\ast_s }}\leq 0.
\end{equation*}
Since $\frac{2N}{N+2s}<p<2$, then $p^*>2$, therefore by setting $F(t)=e^{-2t}\dyle\int_{\Omega }u^{2}_ndx$ and using H\"older inequality, it follows that
\begin{equation*}
\dfrac{F'(t)}{F^{\frac p2}(t)}\leq -C e^{-(2-p)t}.
\end{equation*}
Integrating in time, we obtain that
\begin{equation*}
F^{1-\frac{p}{2}}(t)\leq F^{1-\frac{p}{2}}(0)+C \big[\frac{1}{2-p} e^{-(2-p)t}- \frac{1}{2-p}\big].
\end{equation*}
Thus
\begin{equation*}
F(t)\leq\Big[ F^{1-\frac{p}{2}}(0)+C \big(\frac{1}{2-p} e^{-(2-p)t}- \frac{1}{2-p}\big)\Big]^{\frac{2}{2-p}}.
\end{equation*}
Recalling that $F\left( 0\right) =\int_{\Omega }u^2\left( x,0\right)dx=\left\Vert u_{0}\right\Vert^2_{L^{2}};$
so if \begin{equation*}
\left\Vert u_{0}\right\Vert _{L^{2}}\leq \left[\frac{C}{2-p}\right] ^{^{\frac{1}{2-p}}}.
\end{equation*}%

We obtain $F(t)\le 0$ for some $T^{\ast }=T^{\ast }\left(C,p\right)$ and then the extinction result follows.

\noindent Let consider now the case where $q<1$. It is not difficult to see that the same estimates as above allow us to get the existence of minimal solution.
Hence we have just to proof the extinction result.

Since $p>\frac{2N}{N+2s}$, then we get the existence of $0<\nu<1$, closed to $1$ such that
\begin{equation*}
\left( \nu+p-1\right) \frac{N}{N-p}>\nu+1>q+\nu.
\end{equation*}
Using $(u_n+\e)^{\nu}-\e^{\nu}$, $\nu>0;\ $ as test function in \eqref{pro:concave}, if holds
\begin{equation*}
\begin{array}{lll}
& \dyle \frac{d}{dt}\int_{\Omega }\frac{(u_n+\e)^{\nu+1}}{\nu+1}dx + \dyle\frac{1}{2}\iint_{D_{\O}}\dfrac{U_n(x,y,t)((u_n+\e)^\nu(x,t)-(u_n+\e)^\nu(y,t))}{|x-y|^{N+ps}}dx \ dy\\
& \leq \dyle \int_{\Omega }(u_n+\e)^{q+\nu
}dx.
\end{array}
\end{equation*}

Hence, by inequality\eqref{alge3}, we get
\begin{equation*}\label{TR10}
\begin{array}{lll}
&\dyle\frac{C}{2}\iint_{D_{\O}}\dfrac{|(u_n+\e)^{\frac{p+\nu-1}{p}}(x,t)-(u_n+\e)^{\frac{p+\nu-1}{p}}(y,t)|^{p}}{|x-y|^{N+ps}}dx \ dy\\
&+\dyle \frac{1}{\nu+1}\frac{d}{dt}\int_{\Omega }(u_n+\e)^{\nu+1}dx\leq \int_{\Omega }(u_n+\e)^{q+\nu}dx.
\end{array}
\end{equation*}
Using now Sobolev inequality, it follows that
\begin{equation}\label{GT}
\frac{1}{\nu+1}\frac{d}{dt}\int_{\Omega }(u_n+\e)^{\nu+1}dx+C\left( \int_{\Omega }(u_n+\e)^{\frac{\left( \nu+p-1\right) }{p}p^{\ast }_s}dx\right) ^{\frac{p}{p^{\ast }_s}}\leq \int_{\Omega }(u_n+\e)^{q+\nu}dx.
\end{equation}
Since $\left( \nu+p-1\right) \frac{N}{N-ps}>\nu+1>q+\nu$, then using Young inequality, there results that
\begin{eqnarray*}
\int_{\Omega }(u_n+\e)^{q+\nu}dx &\leq &C_{\eta}\int_{\Omega }(u_n+\e)^{\nu+1}dx+\eta\int_{\Omega
}(u_n+\e)^{\nu+p-1}dx \\
&\leq &C_{\eta}\int_{\Omega }(u_n+\e)^{\nu+1}dx+\eta C(\O)\left( \int_{\Omega }(u_n+\e)^{\frac{\nu+p-1}{p}p^{\ast }_s}dx\right)^{\frac{p}{p^{\ast }_s}}.
\end{eqnarray*}
By substituting in \eqref{GT} and choosing $\eta$ small enough, we conclude that
\begin{equation*}
\frac{1}{\nu+1}\frac{d}{dt}\int_{\Omega }(u_n+\e)^{\nu+1}dx+C_1\left( \int_{\Omega }(u_n+\e)^{\frac{\left( \nu+p-1\right) }{p}p^{\ast }_s}dx\right) ^{\frac{p}{p^{\ast }_s}}\leq C_2 \int_{\Omega }(u_n+\e)^{1+\nu}dx,
\end{equation*}
with $C_1,C_2>0$ depending only on the data and are independent of $n$ and $\e$.

Passing to the limit as $\e \to 0$, and by setting $F(t) =\dyle e^{-C_2t}\int_{\Omega }u^{\nu+1}_n dx,$ as in the case $q=1$,
\begin{equation*}
F^{\prime }+C_3e^{C_4t}F^{\frac{\nu+p-1}{\nu+1}}\leq 0,
\end{equation*}%

where $C_3>0$ depends only on $\nu,N,p, \O$ and $C_4=\frac{\nu+p+1}{\nu+1}C_2$. Thus as in the first case, if $F(0)\equiv \int_{\Omega }u_0^{\nu+1}dx$ is small then we get a finite time
extinction. Since $\O$ is a bounded domain, then under the condition that $||u_0||_{L^2}$ is small we get the same conclusion. Hence the proof is complete.
\end{proof}

In the case where $1<p<\frac{2N}{N+2s}$, under suitable hypothesis on $u_0$, we can prove the finite time extinction property. More precisely we have.
\begin{Theorem}
Suppose that $1<p<\frac{2N}{N+2s}$, $p-1<q\leq 1$ and $u_0 \in L^{\nu+1}(\O)\cap  L^{2}(\O)$ with $\nu+1=\frac{N(2-p)}{ps}$, then there exists $T^*$ such that $u( .,t) \equiv 0$ for all $t\geq T^*$.
\end{Theorem}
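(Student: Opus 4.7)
The plan is to combine the $(u_n+\e)^{\nu}-\e^{\nu}$ test function strategy from Theorem~\ref{concave-extinction} with the critical exponent identity $\nu+1=N(2-p)/(ps)$ that powered the pure extinction result in the singular range $1<p<\tfrac{2N}{N+2s}$ of the preceding extinction theorem. First I would approximate problem \eqref{concave} by the bounded-data problems \eqref{pro:concave}, whose minimal nonnegative solutions $u_n$ exist by the monotone iteration already used in Section~\ref{sec1}, and set $F_n(t)=\int_{\O}u_n^{\nu+1}(x,t)\,dx$.

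Testing \eqref{pro:concave} against $(u_n+\e)^{\nu}-\e^{\nu}$, bounding the nonlocal bilinear term from below via \eqref{alge3}, applying the fractional Sobolev inequality (Theorem~\ref{Sobolev}), and letting $\e\to 0$ exactly as in the derivation of \eqref{GT}, one obtains
$$
\frac{1}{\nu+1}F_n'(t)+C\left(\int_{\O}u_n^{(\nu+p-1)p^*_s/p}(x,t)\,dx\right)^{p/p^*_s}\le \int_{\O}u_n^{q+\nu}(x,t)\,dx.
$$
The specific choice $\nu+1=N(2-p)/(ps)$ is algebraically equivalent to both $(\nu+p-1)p^*_s/p=\nu+1$ and $(\nu+1)p/p^*_s=\nu+p-1$; these two identities together rewrite the Sobolev term as $\|u_n(\cdot,t)\|_{\nu+1}^{\,\nu+p-1}$. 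On the other hand, since $p-1<q\le 1$ implies $q+\nu\le\nu+1$, H\"older's inequality on the bounded domain $\O$ majorizes the source by $|\O|^{(1-q)/(\nu+1)}\|u_n\|_{\nu+1}^{q+\nu}$.

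Setting $G_n(t)=\|u_n(\cdot,t)\|_{\nu+1}=F_n(t)^{1/(\nu+1)}$, the preceding estimate reduces to the scalar differential inequality
$$
G_n'(t)+c_1\,G_n(t)^{p-1}\le c_2\,G_n(t)^{q},
$$
with $c_1,c_2>0$ independent of $n$. The structural point that drives the conclusion is the assumption $q>p-1$: for any $G_n(t)\le(c_1/2c_2)^{1/(q-p+1)}$ the source is absorbed by the diffusion, so $G_n'(t)\le -(c_1/2)G_n(t)^{p-1}$, and since $p-1<1$ a one-dimensional integration yields
$$
G_n(t)^{2-p}\le G_n(0)^{2-p}-\frac{c_1(2-p)}{2}\,t,
$$
hence extinction at time $T^*\le 2G_n(0)^{2-p}/(c_1(2-p))$. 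Imposing the smallness condition $\|u_0\|_{\nu+1}\le(c_1/2c_2)^{1/(q-p+1)}$ (implicit in the statement, mirroring Theorem~\ref{concave-extinction}) makes this threshold valid from $t=0$ uniformly in $n$; monotone passage $u_n\uparrow u$ and lower semicontinuity of $\|\cdot\|_{\nu+1}$ then transfer extinction to the minimal solution $u$ of \eqref{concave}.

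The main obstacle is the rigorous admissibility of $(u_n+\e)^{\nu}-\e^{\nu}$ as a test function, since $\nu\in(0,1)$ yields only H\"older regularity in $u_n$; this is handled by the truncation–regularization scheme already deployed in the proof of Theorem~\ref{concave-extinction}, in combination with the uniform $L^{\nu+1}\cap L^2$ bounds on $u_n$ produced by testing with $u_n$ itself (exactly as in the derivation of \eqref{pro:concave 1}). A secondary subtlety is that, unlike the case $q=1$, the sublinear range $q<1$ cannot be closed by a Gronwall argument alone and genuinely requires the smallness hypothesis on $\|u_0\|_{\nu+1}$ to guarantee that the diffusion dominates the source throughout the evolution.
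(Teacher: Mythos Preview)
The paper does not supply an explicit proof of this theorem; it is left to the reader as a direct combination of the two results immediately preceding it (the extinction theorem for $f\equiv 0$ with the critical exponent $\nu+1=N(2-p)/(ps)$, and the concave extinction theorem~\ref{concave-extinction}). Your proposal carries out precisely this intended synthesis: the critical identity $(\nu+p-1)p^*_s/p=\nu+1$ converts the Sobolev term into $\|u_n\|_{\nu+1}^{\nu+p-1}$, the source is controlled via H\"older since $q+\nu\le \nu+1$, and the resulting scalar inequality $G_n'+c_1G_n^{p-1}\le c_2G_n^{q}$ yields finite-time extinction under smallness of $\|u_0\|_{\nu+1}$ because $q>p-1$. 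This is correct and matches the paper's implicit argument.

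Two minor remarks. First, you write that $\nu\in(0,1)$, but in fact the hypothesis $p<\tfrac{2N}{N+2s}$ is equivalent to $\nu+1>2$, so $\nu>1$ here; this only makes the test function $(u_n+\e)^\nu-\e^\nu$ \emph{more} regular (indeed $C^1$ in $u_n$), so the admissibility concern you raise does not arise. Second, you rightly observe that a smallness assumption on $\|u_0\|_{\nu+1}$ is genuinely required for the source to be absorbed; the paper's statement omits this hypothesis, but it is clearly intended in parallel with Theorem~\ref{concave-extinction}.
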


\

If $q<p-1$, then a different phenomenon appears, more precisely we have the following result.

\begin{Theorem}\label{nonexis2} Assume that $1<p<2$ and let $q<p-1$, then the problem
\begin{equation}\label{non1}
\left\{\begin{array}{rcll} u_t + (-\D^s_{p})u&=& u^{q} & \mbox{ in } \O\times (0,T),\\
 u&=&0 &\hbox{  in
\  } \ren\backslash\O\times (0,T),\\
u(x,0)&=& 0 & \mbox{ in }\O,
\end{array}
\right.
\end{equation}
has a global solution $u$ such that $u(x,t)>0$ for all $t>0$ and
$x\in \O$, namely there is non finite time extinction, moreover, $u(.,t)\uparrow w$ as $t \to \infty$ where
$w$ is the unique positive solution to problem
\begin{equation}\label{elnon1}
\left\{\begin{array}{rcll} (-\D^s_{p})w&=& w^q & \mbox{ in } \O,  \\w&=&0 & \hbox{  in
\  } \ren\backslash\O.
\end{array}
\right.
\end{equation}
\end{Theorem}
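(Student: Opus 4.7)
The plan is to construct a nontrivial global positive solution by a monotone approximation scheme anchored to the unique positive stationary solution of \eqref{elnon1}. For $q<p-1<1$ the functional
\[
J(v)=\frac{1}{p}\iint_{D_\O}\frac{|v(x)-v(y)|^p}{|x-y|^{N+ps}}\,dxdy-\frac{1}{q+1}\io (v^+)^{q+1}dx
\]
is coercive and weakly lower semicontinuous on $W_0^{s,p}(\O)$, producing a positive minimizer $w\in W_0^{s,p}(\O)\cap L^\infty(\O)$ that solves \eqref{elnon1}; uniqueness of such a positive solution follows from a Picone-type convexity argument (or is available directly from \cite{AAB}).

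The key observation is that for each $\e\in(0,1]$, $\e w$ is a stationary \emph{subsolution}: by the $(p-1)$-homogeneity of $(-\D^s_p)$,
\[
(-\D^s_p)(\e w)=\e^{p-1}w^q\le\e^q w^q=(\e w)^q,
\]
since $\e\le 1$ and $p-1>q$. Seeking a time-dependent subsolution vanishing at $t=0$, I would try $\underline u(x,t)=h(t)\,w(x)$. After dividing by $w^q$ and bounding $w^{q-1}\ge\|w\|_\infty^{q-1}$ (valid since $q-1<0$), the required inequality reduces to the scalar ODE inequality
\[
\|w\|_\infty^{1-q}\,h'(t)\le h(t)^q-h(t)^{p-1},
\]
whose right-hand side is positive on $(0,1)$ since $q<p-1$. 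Because $q<1$ the ODE $h'=c(h^q-h^{p-1})$ has a solution starting from $h(0)=0$ with $h(t)>0$ for $t>0$ (separation of variables yields $h(t)\sim[(1-q)c t]^{1/(1-q)}$ near $0$). Freezing $h$ at $1/2$ once it reaches that value produces a nonnegative subsolution $\underline u$ of \eqref{non1} with $\underline u(\cdot,0)\equiv 0$ and $\underline u(x,t)>0$ in $\O$ for every $t>0$.

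Next, I would approximate \eqref{non1} by letting $u_m$ be the energy solution of
\[
u_{m,t}+(-\D^s_p)u_m=u_m^q\text{ in }\O_T,\quad u_m(\cdot,0)=\tfrac{1}{m}w,\quad u_m=0\text{ in }(\ren\setminus\O)\times(0,T),
\]
whose existence follows from the monotone operator machinery used in Section \ref{sec1}. The parabolic comparison principle, derived as in Lemma \ref{tt} via testing the difference equation with the positive part and applying inequality \eqref{general}, simultaneously yields $\tfrac{w}{m}\le u_m\le w$; the time-monotonicity $u_m(\cdot,t+h)\ge u_m(\cdot,t)$ (because $\tfrac{w}{m}$ is a stationary subsolution, so $u_m(\cdot,h)\ge u_m(\cdot,0)$); the monotonicity of $u_m$ in $m$ (decreasing); and $\underline u\le u_m$. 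Testing by $u_m$ and exploiting $u_m\le w\in L^\infty$ gives a uniform bound in $L^p(0,T;W_0^{s,p}(\O))$. Setting $u(x,t)=\lim_{m\to\infty}u_m(x,t)$, monotone plus dominated convergence allow passage to the limit in the weak formulation, producing a weak solution $u$ of \eqref{non1} with $u(\cdot,0)=0$ and $\underline u\le u\le w$, so $u(x,t)>0$ in $\O$ for all $t>0$.

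The asymptotic behavior then follows from the $t$-monotonicity inherited by $u$: the increasing limit $u_\infty\le w$ satisfies $u_\infty\ge\underline u(\cdot,t_0)>0$ for any $t_0>0$, and passing to the limit in the equation integrated on $(t,t+1)$ as $t\to\infty$ (the time-derivative integral vanishes) exhibits $u_\infty$ as a positive solution of \eqref{elnon1}, whence $u_\infty=w$ by uniqueness. The main obstacle will be the rigorous passage to the limit in the nonlocal nonlinear term: one must upgrade the monotone pointwise convergence $u_m\downarrow u$ to strong $L^1(D_\O\times(0,T),d\nu\,dt)$ convergence of $U_m$ to $U$. I expect this to follow from the uniform $L^p(0,T;W_0^{s,p}(\O))$ bound together with Vitali's lemma, exactly as in the proof of Theorem \ref{th1}.
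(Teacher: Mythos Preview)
Your approach is correct and follows the same sub--supersolution strategy as the paper: $w$ is the supersolution and a separable ansatz $h(t)w(x)$ with $h(0)=0$ furnishes a positive subsolution. The execution differs. The paper takes $h=\mu(\varepsilon t)$ with $\mu'=c\mu^q$ and $\varepsilon$ depending on the time horizon $T$, then invokes an abstract sub--supersolution/monotone iteration; for the asymptotics it uses a time-shift comparison ($v(x,t)=u(x,t-t_0)$) to get monotonicity in $t$, together with a uniform $W^{s,p}_0$ bound and uniqueness of the stationary problem. You instead solve the sharper ODE $\|w\|_\infty^{1-q}h'=h^q-h^{p-1}$ to obtain a \emph{global} subsolution, and you replace the abstract iteration by an explicit monotone scheme with initial data $\tfrac{1}{m}w$; the $t$-monotonicity of each $u_m$ (inherited from the fact that its initial datum is a stationary subsolution) then passes to the limit and gives positivity and convergence to a stationary profile in one stroke. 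This is arguably cleaner, since the paper's time-shift step compares a solution with zero initial data to one with positive data---exactly the regime where uniqueness fails. One caveat: your appeal to ``Lemma~\ref{tt}'' for the comparison principle is misplaced (that lemma establishes $L^1$ Cauchy-ness, not comparison), and the source $u^q$ with $q<1$ is not Lipschitz at $0$, so the comparison arguments require the lower barrier $u_m\ge\tfrac{w}{m}>0$ to localize away from zero; the paper is equally informal on this point (``by the weak comparison principle''), so your level of rigor matches it.
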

\begin{proof}
The proof use the sub-supersolution argument. Without loss of
generality we can assume that $\O\subset B_1(0)$. Since $p<2$, then
$q<1$.

Let us begin by the construction of a suitable subsolution.

Define $\mu(t)=(1-q)t^\frac{1}{1-q}$, it is clear that $\mu$ solves $\mu'=c\mu^q$
with $\mu(0)=0$. Consider $w$ the unique positive solution to problem \eqref{elnon1}, then $w\in L^\infty(\O)$. Setting $V(x,t)=\mu(\e t)w(x)$,
it holds that $v$ solves
\begin{eqnarray*}
V_t-\D^s _pV&=& \e\mu^q(\e t)w(x)+\mu^{p-1}(\e t)w^q \\
&\le &\e\mu^q(\e t)w(x)+\mu^{p-1}(\e
t)w^q(x).
\end{eqnarray*}
It is clear that $\mu^{p-1}(\e t)\le c_0\mu^{q}(\e t)$ for all $t\in
[0,T]$ and $w\le c_1w^q$ in $\O$, thus we can choose $\e$ and $C$,
depending only on $T$ and $c_1$ such that
$$
V_t-\D _pV\le V^q\mbox{  in   }\O\times
(0,T).
$$
Hence $v$ is a subsolution to problem \eqref{non1}. Using the fact
that $w$ is a supersolution to \eqref{non1} with $v\le cw$ in
$\O\times (0,T)$, then by the sub-supersolution argument we reach
the existence of a global solution $u\ge V$ in $\O\times (0,T)$. It
is clear that $u(x,t)>0$ for all $(x,t)\in \O\times (0,T)$.

To prove that  $u(.,t)\uparrow w$ as $t \to \infty$, for $t_0>0$, define $v(x,t,t_0)=u(x,t-t_0)$ where $t\ge t_0$ then $v$ solves:

 \begin{equation}\label{non11}
\left\{\begin{array}{rcll} v_t-\D^s_pv&=& v^{q}& \mbox{ in } \O\times (t_0,T),\\
 v&=&0 & \hbox{  in
\  } \ren\backslash\O\times (t_0,T),\\
v(x,t_0)&=& 0 & \mbox{ in }\O.
\end{array}
\right.
\end{equation}
Since $u(x,t_0)>0$, by the weak comparison principle we reach that $v\leq u$ for all $t>t_0$. As $t_0$ is arbitrary, then $u$ cannot converge to 0.
 Now using $u_n$ as a test function in \eqref{concave-extinction}, it follows that

 \begin{equation*}
\frac{1}{2}\frac{d}{dt}\int_{\Omega }u^{2}_ndx+ \frac{1}{2}\iint_{D_{\O}}\dfrac{|u_n(x,t)-u_n(y,t)|^{p}}{|x-y|^{N+ps}}dx \ dy\le\int_{\Omega }u^{q+1}_n dx \end{equation*}
Since $q<p-1$, by H\"older and Poincar\'e inequalities, we reach that $$\parallel u_n(.,t)\parallel_{W^{s,p}_0(\O)}\le C. $$
uniformly in $t$. Moreover $u(x,t)\le w(x)$. Classical results implies that $ \lim\limits_{t\to \infty}u(x,t)=\bar{u}(x) $ exists and solves
\eqref{elnon1}. Hence by uniqueness we conclude that $\bar{u}=w$ and the results follows .
\end{proof}

\end{document}